\newtheorem{theorem}{Theorem}[section]
\newtheorem{proposition}[theorem]{Proposition}
\newtheorem{lemma}[theorem]{Lemma}
\newtheorem{corollary}[theorem]{Corollary}
\newtheorem*{maintheorem}{Main Theorem}
\newtheorem*{mainconjecture}{Conjecture}
\newcommand{\inv}{^{-1}}
\begin{document}

\title{\large{\textbf{NOWHERE-ZERO FLOWS ON\\ SIGNED EULERIAN GRAPHS}}}

\author{
Edita M\'a\v cajov\' a
and Martin \v{S}koviera\\[3mm]
Department of Computer Science\\
Faculty of Mathematics, Physics and Informatics\\
Comenius University\\
842 48 Bratislava, Slovakia\\[2mm]
{\small\tt macajova@dcs.fmph.uniba.sk}\\[-1mm]
{\small\tt skoviera@dcs.fmph.uniba.sk}}

\date{}

\maketitle

\begin{abstract}
This paper is devoted to a detailed study of nowhere-zero flows
on signed eulerian graphs. We generalise the well-known fact
about the existence of nowhere-zero $2$-flows in eulerian
graphs by proving that every signed eulerian graph that admits
an integer nowhere-zero flow has a nowhere-zero $4$-flow. We
also characterise signed eulerian graphs with flow number $2$,
$3$, and $4$, as well as those that do not have an integer
nowhere-zero flow. Finally, we discuss the existence of
nowhere-zero $A$-flows on signed eulerian graphs for an
arbitrary abelian group~$A$.

\vskip8pt\noindent{\bf Keywords:} Eulerian graph;  signed
graph; bidirected graph; nowhere-zero flow
\end{abstract}

\section{Introduction}
A nowhere-zero flow is an assignment of an orientation and a
nonzero value from an abelian group $A$ to each edge of a graph
in such a way that the Kirchhoff current law is fulfilled at
each vertex: the sum of in-flowing values equals the sum of
out-flowing values. This concept was introduced by Tutte
\cite{Tutte1} in 1949 and has been extensively studied by many
authors. There is an analogous concept of a nowhere-zero flow
that uses bidirected edges instead of directed ones, first
systematically developed by Bouchet \cite{Bouchet} in 1983. A
bidirected edge is an edge consisting of two half-edges which
receive separate orientations; a bidirected graph is one where
each edge has been bidirected. A nowhere-zero flow on a
bidirected graph is formed by valuating each edge with a
nonzero element of $A$ in such a way that, for every
vertex~$v$, the sum of values on the half-edges directed to $v$
equals the sum of values on the half-edges directed out of $v$.
If the half-edges of each edge of $G$ are aligned, $G$
essentially becomes a directed graph, which implies that the
bidirected variant of a flow is more general than the simply
directed one.

It is obvious that the choice of an orientation in the simply
directed case is immaterial: it is always possible to reverse
an edge and change its flow value to the opposite value. The
same is true in the bidirected case as long as both half-edges
of an edge are reversed at once, leaving the partition into
aligned and non-aligned edges unaltered. It is somewhat less
obvious that the flow on a bidirected graph can be preserved
even if this edge-partition is disturbed by reversing all the
half-edges around a vertex. If we keep the flow values intact,
the Kirchhoff law will not be violated. This indicates that the
invariance of flows on bidirected graphs is best understood in
terms of signed graphs where the aligned edges are positive,
non-aligned edges are negative, and the just described
operation is the familiar vertex-switching from signed graph
theory. To summarise, the existence of a flow is a property of
a signed graph invariant under switching equivalence and
independent of its particular bidirection. An easy switching
argument further shows that the simply directed case
corresponds to the case of balanced signed graphs, those where
every circuit has an even number of negative edges.

The study of flows on signed (or, equivalently, bidirected)
graphs seems to have its roots in the study of embeddings of
graphs in nonorientable surfaces. Signed graphs provide a
convenient language for description of such embeddings and
bidirected graphs arise naturally as duals of directed graphs.
In 1968, Youngs \cite{Youngs1, Youngs2} employed nowhere-zero
flows on signed cubic graphs with values in cyclic groups,
combined with surface duality, to construct nonorientable
triangular embeddings of certain complete graphs. A duality
relationship between local tensions and flows on graphs
embedded in nonorientable surfaces was the main motivation for
Bouchet's work \cite{Bouchet} on integer flows in signed
graphs. As in the unsigned case, the fundamental problem here
is to find, for a given signed graph, a nowhere-zero flow with
the smallest maximum absolute edge-value. If this value is
$k-1$, we speak of a nowhere-zero $k$-flow. In the same paper
Bouchet showed that every graph that admits a nowhere-zero
integer flow has a nowhere-zero $216$-flow. He also proposed
the following conjecture that has become an incentive for much
of the current research in the area.

\begin{mainconjecture} {\rm (Bouchet's $6$-Flow Conjecture)}
Every signed graph with a nowhere-zero integer flow has a
nowhere-zero $6$-flow.
\end{mainconjecture}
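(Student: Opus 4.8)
The natural plan is to transplant Seymour's proof of the $6$-flow theorem for ordinary graphs into the signed setting. Recall that for an unsigned bridgeless graph Seymour obtains a nowhere-zero $6$-flow by producing a nowhere-zero flow with values in $\mathbb{Z}_2\times\mathbb{Z}_3$, which he assembles from a $\mathbb{Z}_2$-flow supported on an even subgraph $Q$ and a $\mathbb{Z}_3$-flow supported on a subgraph $H$ with $E(Q)\cup E(H)=E(G)$, the $\mathbb{Z}_3$-part being built up circuit by circuit along a connected, ear-like structure so that no edge receives value $0$ in both coordinates. So the first step would be to reduce Bouchet's conjecture, by standard operations (suppressing degree-$2$ vertices, splitting vertices of higher degree, deleting balanced bridges), to a flow-admissible signed graph $(G,\sigma)$ that is $2$-edge-connected and, ideally, cubic — while checking, as one must in the signed world, that each reduction preserves flow-admissibility and does not lower the flow number.

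The second step is to manufacture the two flows. On the $\mathbb{Z}_2$-side little changes, since $\mathbb{Z}_2$ ignores signs: one wants an even subgraph $Q$ of the underlying graph whose edge-complement will be made to absorb the negative edges in a controlled way. The delicate part is the $\mathbb{Z}_3$-side: one needs a subgraph $H$ covering $E(G)\setminus E(Q)$ that admits a nowhere-zero $\mathbb{Z}_3$-flow \emph{as a signed graph}, and here a single circuit no longer serves as an ear — a single negative circuit carries only the zero $\mathbb{Z}_3$-flow, so an unbalanced piece needs two edge-disjoint negative circuits joined through it, and the ear decomposition has to be organised around the negative edges. Once $Q$ and $H$ are in place, adding the $\mathbb{Z}_2$-flow on $Q$ to the $\mathbb{Z}_3$-flow on $H$ yields a nowhere-zero $\mathbb{Z}_2\times\mathbb{Z}_3\cong\mathbb{Z}_6$-flow, which converts to an integer nowhere-zero $6$-flow in the usual way.

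I expect the decisive obstacle to be precisely this interaction between connectivity and the balanced/unbalanced dichotomy. In an unsigned graph, $2$- and $3$-edge-connectivity guarantee that the ears needed to grow the $\mathbb{Z}_3$-subgraph exist and close up; in a signed graph an edge cut can isolate a balanced part and block any nonzero flow regardless of how large the connectivity is, so negative circuits cannot be routed and localised the way Seymour localises his ears. Keeping track of the negative edges globally — guaranteeing that every negative edge sitting in $H$ lies on a pair of edge-disjoint negative circuits of $H$, while simultaneously keeping $H$ connected enough to support a $\mathbb{Z}_3$-flow — is the crux, and is presumably why at present only a weaker constant bound is available in full generality (Bouchet's $216$-flow, and improvements thereof), with the sharp value $6$ known only for restricted families such as highly edge-connected signed graphs and, as this paper goes on to show, signed eulerian graphs.
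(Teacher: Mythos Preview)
The statement you are addressing is not a theorem of the paper but Bouchet's $6$-Flow Conjecture, which the paper explicitly presents as open. The paper offers no proof; it merely records the current state of affairs --- DeVos's nowhere-zero $12$-flow being the best general bound, with the sharp constant $6$ established only under strong extra hypotheses such as $6$-edge-connectivity. There is therefore nothing in the paper to compare your argument against.

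Your proposal is, in fact, not a proof either, and you say as much in the final paragraph: you sketch the obvious transplant of Seymour's $\mathbb{Z}_2\times\mathbb{Z}_3$ strategy and then correctly isolate the place where it jams. The diagnosis is accurate. The $\mathbb{Z}_2$-coordinate is insensitive to signs and causes no trouble; the $\mathbb{Z}_3$-coordinate is where the signed structure bites, because a single unbalanced circuit carries only the zero $\mathbb{Z}_3$-flow, so the ears in a Seymour-style decomposition can no longer be single circuits and must instead be signed circuits (balanced circuits or unbalanced bicircuits). Growing such a structure while keeping control of the negative edges is exactly what nobody knows how to do in general, and it is why the best unconditional bound sits at $12$ rather than $6$. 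Your write-up is a fair summary of the heuristic difficulty, but it should be labelled as a discussion of an open problem, not as a proof.
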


The present status of this conjecture can be summarised as
follows. Bouchet's $216$-flow theorem was improved in 1987 by
Z\'yka \cite{Zyka} to a nowhere-zero 30-flow, and very recently
by DeVos \cite{Devos} to a nowhere-zero $12$-flow, which is
currently the best general approximation of Bouchet's
conjecture. In 2005, the existence of a nowhere-zero $6$-flow
was established by Xu and Zhang \cite{Xu} for every
$6$-edge-connected signed graph with a nowhere-zero integer
flow. In 2011, Raspaud and Zhu \cite{Raspaud} proved that every
$4$-edge-connected signed graph admitting an integer
nowhere-zero flow has a nowhere-zero $4$-flow, which is best
possible. Recently, Wu et al. \cite{WuYZZ} proved that
every $8$-edge-connected signed graph admitting an integer
nowhere-zero flow has a nowhere-zero $3$-flow.

In spite of these results, very little is known about exact
flow numbers for various classes of signed graphs.
Surprisingly, the situation is open even for signed eulerian
graphs, although Xu and Zhang \cite[Proposition 1.4]{Xu} proved
that a signed eulerian graph with an even number of negative
edges has a nowhere-zero $2$-flow. The purpose of the present
paper is to strengthen this result by proving the following
theorem which provides a classification of signed eulerian graphs
according to their flow numbers. The smallest example in each of 
the four resulting classes is displayed in Figure~\ref{fig:prototypes}.

\begin{maintheorem} 
Let $G$ be a signed eulerian graph and let $\Phi(G)$ denote the flow 
number of $G$. Then
\begin{itemize}
\item[{\rm (a)}] $G$ has no nowhere-zero flow if and only
    if $G$ is unbalanced and $G-e$ is balanced for some
    edge $e$;

\item[{\rm (b)}] $\Phi(G)=2$ if and only if $G$ has an even
    number of negative edges;

\item[{\rm (c)}] $\Phi(G)=3$ if and only if $G$ can be
    decomposed into three eulerian subgraphs, with an odd
    number of negative edges each, that share a common
    vertex;

\item[{\rm (d)}] $\Phi(G)=4$ otherwise.
\end{itemize}
\end{maintheorem}

\begin{figure}[htbp]\label{fig:prototypes}
  \centerline{
     \scalebox{0.6}{
       \input{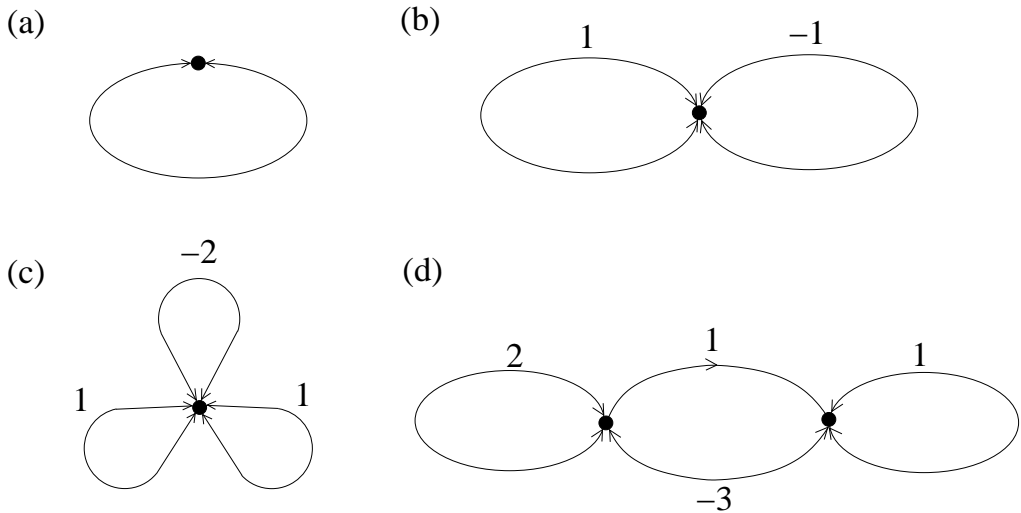}
     }
  }
\caption{Examples of signed eulerian graphs from Main Theorem}
\end{figure}

Our paper is organised as follows. The next section gives a
brief introduction to signed graphs collecting the concepts and
results to be used later in this paper; for more information on
signed graphs we refer the reader to \cite{Zaslav1, Zaslav2,
Zaslav3}. The third section concentrates on the problem of
the existence of nowhere-zero integer flows in signed graphs and
provides a characterisation of graphs that admit a nowhere-zero
integer flow. A corollary of this characterisation states that
a signed eulerian graph has no nowhere-zero integer flow if and
only if its signature is switching-equivalent to one with a
single negative edge. In Section~$4$ we prove that all other
eulerian graphs admit a nowhere-zero $4$-flow and in Section~5
we characterise those with flow number~$3$. Finally, the last
section deals with the existence of nowhere-zero $A$-flows on
signed eulerian graphs for an arbitrary abelian group $A$.

\medskip

We conclude this section with a few terminological and
notational remarks. All our graphs are finite and may have
multiple edges and loops. A graph is called \textit{eulerian}
if it is connected, with vertices of even valency. A
\textit{circuit} is a connected $2$-regular graph, and a
\textit{cycle} is a graph that has a decomposition (possibly
empty) into edge-disjoint circuits. A set of edges is often
identified with the subgraph it induces; this should not cause
any confusion. Each walk is understood to be directed from its
initial to its terminal vertex. The walk obtained from $W$ by
reversing the direction will be denoted by $W\inv$. If $x$ and
$y$ are two vertices of $W$ listed in the order of their
appearance on~$W$, we let $W[x,y]$ denote the portion of $W$
initiating at $x$ and terminating at $y$. Finally, if $W_1$ is a 
$u$-$v$-walk and $W_2$ is a $v$-$w$-walk, then $W_1W_2$ denotes the
$u$-$w$-walk where the traversal of $W_1$ is followed by the
traversal of $W_2$.

\section{Fundamentals of signed graphs}

A \textit{signed graph} is a graph $G$ endowed with a
\textit{signature}, a mapping that assigns $+1$ or $-1$ to each
edge. In our notation, the signature is usually implicit in the
notation of the graph itself; only when needed, it will be
denoted by $\sigma_G$, or simply by $\sigma$, if $G$ is clear
from the context. As most graphs considered here will be
signed, the term \textit{graph} will usually be used to mean a
signed graph, and the adjective \textit{signed} will be added
only for emphasis. An unsigned graph will be regarded as a
signed graph having the \textit{all-positive signature}
$\sigma_G\equiv +1$.

The actual distribution of edge signs in a signed graph is not
very important. What is fundamental is the product of signs on
each of its circuits. This constitutes the concept of a
\textit{balance} in a signed graph: Let $F$ be a subgraph or a
set of edges of a signed graph~$G$. We define the \textit{sign}
of $F$, denoted by $\sigma_G(F)$, as the product of the signs
of all edges in~$F$. Thus, every subgraph or set of edges can
either be \textit{positive} or \textit{negative}, depending on
whether its sign is $+1$ or $-1$. A closed walk, in particular
a circuit, is said to be \textit{balanced} if its sign is $+1$;
otherwise it is \textit{unbalanced}. A graph is
\textit{balanced} if it contains no unbalanced circuit. The
collection of all balanced circuits of a signed graph is its
most fundamental characteristic: signed graphs that have the
same underlying graphs and the same sets of balanced circuits
are considered to be \textit{identical}, irrespectively of
their actual signatures. Signatures of identical signed graphs
are called \textit{equivalent}.

Let $G$ be a signed graph and let $v$ be a vertex of $G$. It is
clear that if we interchange the signs of all non-loop edges
incident with $v$, the set of balanced circuits will not
change. This operation, called \textit{switching} at $v$, thus
produces an equivalent signature. More generally, for a set $U$
of vertices of $G$ we define \textit{switching} at $U$ as the
interchange of signs on all edges with exactly one end-vertex
in $U$. It is easy to see that switching the signature at $U$
has the same effect as switching at all the vertices of $U$ in
a succession. Furthermore, it is not difficult to prove that
two signatures are equivalent if and only if they are
\textit{switching-equivalent}, that is, if they can be turned
into each other by a sequence of vertex switchings
\cite[Proposition 3.2]{Zaslav2}. In particular, a signed graph
is balanced if and only if its signature is
switching-equivalent to the all-positive signature.

The following characterisation of balanced graphs due to Harary
\cite{Harary} has the same spirit as the well-known
characterisation of bipartite graphs.

\begin{theorem}\label{thm:bal-bipart}
\emph{(Harary's Balance Theorem \cite{Harary})} A signed graph
is balanced if and only if its vertex set can be partitioned
into two sets (either of which may be empty) in such a way that
each edge between the sets is negative and each edge within a
set is positive.
\end{theorem}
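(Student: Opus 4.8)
The plan is to prove the two implications separately, leaning on the switching characterisation of balance recalled just above: a signed graph is balanced if and only if its signature is switching-equivalent to the all-positive signature.

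First I would dispatch the easy direction. Suppose $V(G)=V_1\cup V_2$ is a partition in which every edge joining $V_1$ to $V_2$ is negative and every edge with both ends in the same part is positive. Traversing an arbitrary circuit $C$ once around, the negative edges of $C$ are exactly those along which the traversal passes from one part to the other; since the traversal returns to its starting vertex, the number of such crossings --- hence the number of negative edges of $C$ --- is even. Therefore $\sigma_G(C)=+1$, so $C$ is balanced, and as $C$ was arbitrary, $G$ is balanced.

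For the converse, assume $G$ is balanced. By the switching characterisation there is a set $U\subseteq V(G)$ for which switching $\sigma_G$ at $U$ produces the all-positive signature. Since switching at $U$ negates precisely the edges having exactly one end-vertex in $U$, it follows that, with respect to the original signature, an edge is negative if and only if it has exactly one end in $U$, and positive if and only if both of its ends lie in $U$ or both lie outside $U$. Hence the partition $V_1=U$, $V_2=V(G)\setminus U$ (either part possibly empty) is of the required form.

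One could also avoid the switching lemma altogether: reduce to the case of connected $G$, fix a root $r$, and place each vertex $v$ in $V_1$ or $V_2$ according to the parity of the number of negative edges on some $r$--$v$ path, after which a routine edge-by-edge check confirms that the partition is of the desired type. In that second route the only genuine obstacle is the well-definedness of this assignment: one has to upgrade the hypothesis ``every circuit is balanced'' to ``every closed walk is balanced'' (a closed walk decomposes into edge-disjoint circuits and its sign is the product of their signs) and then use this to see that the parity does not depend on the chosen $r$--$v$ path. If instead one invokes the quoted switching result, essentially no obstacle remains, and the only point needing care is correctly tracking which edges change sign upon switching at a set of vertices.
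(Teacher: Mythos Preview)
The paper does not supply its own proof of this theorem; it is stated as a cited result of Harary and used as background. Your argument is correct and is one of the standard proofs: the forward direction is the parity-of-crossings count, and for the converse you invoke exactly the switching characterisation the paper quotes immediately before the theorem, which makes the desired bipartition drop out as the switching set $U$ and its complement.

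One small wrinkle in your alternative, self-contained route: a closed \emph{walk} need not decompose into edge-disjoint circuits, since edges may repeat. What is true is that each edge traversed an even number of times contributes $+1$ to the sign, so the sign of a closed walk equals the sign of the set of edges used an odd number of times; that edge set is a cycle and hence a disjoint union of circuits, each balanced by hypothesis. With that correction the well-definedness check goes through. Since your primary proof via switching is already complete, this is only a cosmetic point about the aside.
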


The partition of the vertex set of a signed into two sets
mentioned in the previous theorem will be called a
\textit{balanced bipartition}. It is useful to realise that the
balanced bipartition of a signed graph depends on the chosen
signature. However, once a signature is fixed and the graph is
connected, the balanced bipartition is uniquely determined.

A signed graph $G$ is said to be \textit{antibalanced} if
replacing its signature $\sigma_G$ with $-\sigma_G$ makes it
balanced. This definition immediately implies that a signed
graph is antibalanced if and only if its signature is
equivalent to the all-negative signature. Hence, a circuit of
an antibalanced graph is balanced precisely when it has an even
length. Consequently, an antibalanced graph is balanced if and
only if its underlying unsigned graph is bipartite.

The following is a direct consequence of Harary's Balance Theorem.

\begin{corollary}\label{col:antibal-bipart}
A signed graph is antibalanced if and only if its vertex set
can be partitioned into two sets (either of which may be empty)
in such a way that each edge between the sets is positive and
each edge within a set is negative.
\end{corollary}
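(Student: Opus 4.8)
The plan is to obtain this corollary directly from Harary's Balance Theorem (Theorem~\ref{thm:bal-bipart}), using only the definition of antibalance as balance after negation of the signature. Fix a signature $\sigma=\sigma_G$ representing $G$. By definition, $G$ is antibalanced exactly when the signed graph with underlying graph $G$ and signature $-\sigma$ is balanced. One should note in passing that this is independent of the chosen representative $\sigma$: switching at a vertex set commutes with replacing a signature by its negative, so switching-equivalent signatures have switching-equivalent negatives, and balance is a switching invariant.

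First I would apply Theorem~\ref{thm:bal-bipart} to the signed graph $(G,-\sigma)$: it is balanced if and only if $V(G)$ admits a partition into two sets $V_1,V_2$ (either of which may be empty) such that every edge between $V_1$ and $V_2$ is negative with respect to $-\sigma$ and every edge with both ends in a single part (loops included) is positive with respect to $-\sigma$. Then I would translate signs back through the trivial identities ``$-\sigma(e)=-1$ holds precisely when $\sigma(e)=+1$'' and ``$-\sigma(e)=+1$ holds precisely when $\sigma(e)=-1$'', which rephrase the condition above as: every edge between $V_1$ and $V_2$ is positive in $G$ and every edge within a part is negative in $G$.

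Chaining the two equivalences yields exactly the statement. There is essentially no obstacle here; the only points requiring a little care are the routine sign bookkeeping under negation, the observation that loops are correctly handled as edges ``within a set'' (a loop being an odd circuit, hence negative in an antibalanced graph), and the remark that antibalance — and therefore the existence of the claimed partition — does not depend on which equivalent signature is used. Accordingly, the write-up need be only a few sentences invoking Theorem~\ref{thm:bal-bipart}.
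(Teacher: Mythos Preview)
Your proposal is correct and matches the paper's approach exactly: the paper gives no explicit proof, stating only that the corollary is a direct consequence of Harary's Balance Theorem, which is precisely what you obtain by applying Theorem~\ref{thm:bal-bipart} to $(G,-\sigma)$ and translating signs back.
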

The partition of the vertex set of a signed into two sets
mentioned in the previous corollary will be called an
\textit{antibalanced bipartition}.

\section{Flows and flow-admissible signed graphs}

The aim of this section is to introduce the concepts related to
flows in signed graphs and to characterise signed graphs that
admit a nowhere-zero integer flow.

As in the case of unsigned graphs, the definition of a flow
calls for an orientation of the underlying  graph. The signed
version, however, requires a bidirection of edges rather than
just a usual orientation. For this purpose we regard each edge
$e$, including loops, as consisting of two half-edges, each
half-edge being incident with only one end-vertex of $e$. An
edge is \textit{bidirected} if each of its two half-edges is
individually directed from or to its associated end-vertex.
Thus every edge has four possible orientations which fall into
two types:
\begin{itemize}
\item Two of the four orientations have one half-edge
    directed from and the other half-edge towards its
    end-vertex. Such bidirected edges will be identified
    with the usual directed edges and called
    \textit{ordinary} edges.

\item The other type of bidirection has either both
    half-edges directed from or both half-edges oriented
    towards their end-vertices. In the former case, the
    edge is said to be \textit{extroverted}, and in the
    latter case it is said to be \textit{introverted}.
    Extroverted and introverted edges are collectively
    called \textit{broken} edges.
\end{itemize}

A bidirected edge $e$ incident with a vertex $v$ is said to be
directed \textit{out} of $v$ if its half-edge incident with $v$
is directed out of $v$. Similarly, $e$ is said to be directed
\textit{to} $v$ if its half-edge incident with $v$ is directed
to $v$. For an arbitrary vertex $v$, the set of all edges
directed out of $v$ is denoted by $E^{\text{out}}(v)$, and the
set of all edges directed to $v$ is denoted by
$E^{\text{in}}(v)$; the set of all edges incident with $v$ is
denoted simply by $E(v)$.

Every bidirected edge $e$ has a well-defined \textit{reverse}
$e\inv$ obtained by reversing the orientation of both constituting
half-edges. In particular, if $e$ is an ordinary edge, then
$e\inv$ is also ordinary but with the reversed order of its
end-vertices; if $e$ is extroverted, then~$e\inv$ is introverted,
and vice versa.

Given a signed graph $G$, an \textit{orientation} of $G$ is an
assignment of a bidirection to each edge in such a way that the
following compatibility rule is fulfilled: every positive edge
receives an orientation that turns it into an ordinary edge,
while every negative edge receives an orientation that turns it
into a broken edge. Thus, endowing $G$ with an orientation
makes $G$ a \textit{bidirected} graph. Conversely, every
bidirected graph can be regarded as a signed graph in which
ordinary edges are positive and broken edges are negative. In
other words, a bidirected graph and a signed graph endowed with
an orientation are equivalent concepts.

If a signed graph $G$ is bidirected, switching its signature at
an arbitrary vertex or set of vertices must cause the change of
its orientation so that the compatibility rule remains
fulfilled. Therefore we define \textit{switching} at an
arbitrary set $U$ of vertices as an operation that reverses the
orientation of each half-edge incident with a vertex in $U$ and
consequently changes the sign of each edge with exactly one end
in $U$.

We now proceed to the definition of a flow on a signed graph.
Let $G$ be a signed graph which has been endowed with an
arbitrary compatible orientation. A mapping $\xi\colon E(G)\to
A$ with values in an abelian group $A$ is called an
\textit{$A$-flow} on $G$ provided that the following continuity
condition, the \textit{Kirchhoff law}, is satisfied at each
vertex $v$ of $G$:
\begin{eqnarray*}
\sum_{e\in E^{\text{out}}(v)}\xi(e)\ \ -\sum_{e\in
E^{\text{in}}(v)}\xi(e)=0.
\end{eqnarray*}
An $A$-flow $\xi$ is said to be \textit{nowhere-zero} if
$\xi(e)\ne 0$ for each edge $e$ of $G$. A \textit{nowhere-zero}
$k$-flow is a $\mathbb{Z}$-flow that takes its values from the
set $\{\pm 1,\ldots \pm(k - 1)\}$. Clearly, a signed graph that
has a nowhere-zero $k$-flow also has a nowhere-zero $(k
+1)$-flow. The smallest integer $k$ for which a signed graph
$G$ has a nowhere-zero $k$-flow is called the \textit{flow
number} of $G$ and is denoted by $\Phi(G)$.

Take an arbitrary flow $\xi$ on a signed graph $G$. It is
obvious that if we reverse the orientation of an arbitrary edge
$e$ of $G$ -- that is, if we replace $e$ with $e\inv$ -- and
set $\xi(e\inv)=-\xi(e)$, Kirchhoff's law remains satisfied.
Similarly, if we switch the signature at some vertex $v$ of
$G$, we change the bidirection of each edge $e$ incident with
$v$ to a well defined new bidirection $e'$ whose type differs
from that of $e$. At the same time we interchange the roles of
the sets $E^{\text{out}}(v)$ and $E^{\text{in}}(v)$. It follows
that if we set $\xi(e')=\xi(e)$, Kirchhoff's law will continue
to hold at each vertex of~$G$. The conclusion is that any flow
on a signed graph $G$ is essentially independent of the chosen
orientation and the particular signature and only depends on
the switching class of the signed graph itself. This makes the
definition of a flow on a signed graph completely analogous to
the definition of a flow on an unsigned graph.

In spite of this apparent similarity of definitions, flows on
signed graphs can sometimes have a rather unexpected behaviour.
For example, the following useful property has no analogue in
unsigned graphs.

\begin{lemma}\label{lemma:extro}
For any flow on a signed graph, the sum of values on negative
edges taken in the extroverted orientation is zero.
\end{lemma}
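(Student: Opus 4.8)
The plan is to add up the Kirchhoff equations over all the vertices of $G$ and to read the desired identity off the result. It is cleanest to work with half-edges rather than whole edges, so that loops cause no ambiguity: for a vertex $v$, Kirchhoff's law at $v$ says that the sum of the $\xi$-values on the half-edges at $v$ pointing out of $v$ equals the sum of the $\xi$-values on the half-edges at $v$ pointing into $v$ (a loop at $v$ contributing through each of its two half-edges), and this is the same condition as the one stated in the text. Before summing I will normalise the orientation: by replacing each negative edge $e$ with $e\inv$ and setting $\xi(e\inv)=-\xi(e)$ — an operation that keeps $\xi$ a flow by the observation already made in the text — I may assume that every negative edge is extroverted and every positive edge is ordinary. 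This normalisation does not affect the quantity to be evaluated, since the value of a negative edge in its extroverted orientation is exactly the value assigned after the normalisation.

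Now I sum the Kirchhoff condition over all $v\in V(G)$. On the left-hand side every half-edge of $G$ is counted exactly once, because each half-edge is incident with a unique vertex. A positive (hence ordinary) edge $e$ has one half-edge pointing out and one pointing in, so it contributes $\xi(e)-\xi(e)=0$; a negative (hence, after normalisation, extroverted) edge $e$ has both of its half-edges pointing out of their end-vertices — this holds also for a negative loop, whose two half-edges both point out of its single end-vertex — so it contributes $\xi(e)+\xi(e)=2\xi(e)$. Therefore $0=\sum_{e\text{ negative}}2\,\xi(e)=2\sum_{e\text{ negative}}\xi(e)$.

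Since the flow is integer-valued, the equality $2\sum_{e\text{ negative}}\xi(e)=0$ forces $\sum_{e\text{ negative}}\xi(e)=0$, and undoing the normalisation this is precisely the assertion that the sum of the values on the negative edges, each taken in its extroverted orientation, is zero. The only genuinely delicate point is this final cancellation of the factor $2$: it is exactly what ties the lemma to \emph{integer} flows — over a group with $2$-torsion the statement fails, as a single negative loop carrying the value $1$ over $\mathbb{Z}_2$ already shows — and it is also why the lemma has no counterpart for ordinary graphs, where there are no negative edges at all. Beyond being careful with loops and with the reorientation step, as indicated above, there is no real obstacle.
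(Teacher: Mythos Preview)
Your proof is correct and follows essentially the same route as the paper's: normalise the orientation so that every negative edge is extroverted, sum the Kirchhoff equations over all vertices, observe that positive edges cancel while each negative edge contributes $2\xi(e)$, and cancel the factor~$2$. The paper's version is terser and leaves the cancellation of~$2$ implicit (``The result follows''); you make this step explicit and correctly flag that it requires the absence of $2$-torsion, which the paper acknowledges only later in Section~6.
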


\begin{proof}
Take a flow on a signed graph $G$, redirect each negative edge
of $G$ to make it extroverted, and change the flow values
accordingly. By the Kirchhoff law, the total outflow from every
vertex is zero, so the sum of all outflows is again zero. Each
edge contributes to this sum twice, a positive edge with
opposite signs while a negative edge with the same sign.
Ignoring the values on positive edges, which sum to zero, we
infer that the doubled sum of values on the negative edges
equals zero. The result follows.
\end{proof}

It is well known that an unsigned graph admits a nowhere-zero
flow if and only if it is bridgeless, irrespectively of the
group employed. In contrast, the existence of a nowhere-zero
flow on a signed graph may depend on the chosen group, and
exceptional graphs are less straightforward to describe. In the
present section we focus on graphs that admit a nowhere-zero
integer flow and call them \textit{flow-admissible}. Other
groups will be discussed in Section~6.

Consider a signed graph $G$. If $G$ is balanced, then a simple
switching argument shows that each flow on $G$ corresponds to a
flow on the corresponding unsigned graph. Therefore a balanced
signed graph is flow-admissible if and only if it is
bridgeless. In contrast, an unbalanced bridgeless signed graph
may fail to be flow-admissible while a graph having a bridge
may happen to be flow admissible. For example, Bouchet in
\cite[Lemma 2.4]{Bouchet} observed that a $2$-edge-connected
graph with a single negative edge is never flow-admissible.

Our next aim is to characterise unbalanced flow-admissible
signed graphs. Before proceeding to the result, some
preparations are in order.

We describe a simple technique which is useful for
construction of nowhere-zero flows on signed graphs. Consider a
pair of adjacent edges $e$ and $f$ sharing a vertex $v$ in a
bidirected signed graph. We say that the walk $ef$ is
\textit{consistently directed} at $v$ if exactly one of the two
half-edges incident with $v$ is directed to~$v$. A trail $P$ is
said to be \textit{consistently directed} if all pairs of
consecutive edges of $P$ are consistently directed. Now let $G$
be a signed graph carrying an $A$-flow $\phi$, and let $P$ be a
$u$-$v$-trail in $G$; if $\phi$ has not been specified, we are
assuming that $\phi=0$. By \emph{sending a value $b\in A$ from
$u$ to $v$ along $P$} we mean the modification of $\phi$ into a
new valuation $\phi'\colon E(G)\to A$ defined as follows. We
keep the values of $\phi$ everywhere except on the edges of
$P$. On $P$ we change the orientation of edges in such a way
that the initial edge is directed from $u$, and $P$ is
consistently directed at each internal vertex; we change the
flow values accordingly. Finally, for each edge $e$ on $P$ we
replace the current value $\phi(e)$ with $\phi(e)+b$.

Under the new valuation $\phi'$, Kirchhoff's law will be
satisfied at each inner vertex of~$P$. Moreover, if $P$ is
closed -- that is if $u=v$ -- and the number of negative edges
on $P$ is even, Kirchhoff's law will be satisfied at $u$ as
well. Thus sending any value along a closed trail with an even
number of negative edges will turn an $A$-flow into another
$A$-flow.

Define a \textit{signed circuit} as a signed graph of any of
the following three types:
\begin{itemize}
\item[(1)] a balanced circuit,
\item[(2)] the union of two unbalanced circuits that meet
    at a single vertex, or
\item[(3)] the union of two disjoint unbalanced circuits
    with a path that meets the circuits only at its ends,
\end{itemize}
A signed circuit falling under item (2) or (3) will be called
an \textit{unbalanced bicircuit}.

Observe that every signed circuit admits a nowhere-zero
integer-flow. Indeed, a signed circuit $S$ satisfying (1) or
(2) constitutes a closed trail with an even number of negative
edges, so sending value $1$ along the trail produces a
nowhere-zero $2$-flow on $S$. If $S$ is an unbalanced bicircuit
satisfying (3) and consisting of unbalanced circuits $C_1$ and
$C_2$ joined by a path $P$, we switch the signature of $S$ to
make $P$ all-positive and construct a nowhere-zero $3$-flow as
follows. We send value $1$ along $C_1$ from the end-vertex of
$P$, value $-1$ along $C_2$ from the other end-vertex of $P$,
and value $-2$ along $P$ from the end-vertex in $C_1$ to the
end-vertex in $C_2$. Since Kirchhoff's law is satisfied at
every vertex of the bicircuit, the result is a nowhere-zero
$3$-flow.

Now we are in position to present a characterisation of
unbalanced flow-admissible graphs. Much of the result has been
previously known: the equivalence $(a)\Leftrightarrow (b)$
follows from the combination of Bouchet's Proposition 3.1 in
\cite{Bouchet} with Zaslavsky's characterisation of circuits in
the signed graphic matroid \cite[Theorem 5.1~(e)]{Zaslav2}; a
direct graph-theoretical proof can be found in
\cite[Corollary~3.2]{flowdecomp}. The implication
$(a)\Rightarrow (c)$ is a strengthening of Lemma~2.5 from
\cite{Bouchet}, and a special case of the equivalence
$(a)\Leftrightarrow (c)$ for antibalanced signed graphs has
been proved by Akbari et al. in \cite[Theorem~1]{AGKM} using a
different terminology. Nevertheless, to our best knowledge the
complete statement and the proof have never appeared in the
literature.

\begin{theorem}\label{thm:flow-admiss}
The following statements are equivalent for every connected
unbalanced signed graph~$G$.
\begin{itemize}
\item[{\rm (a)}] $G$ admits a nowhere-zero integer flow.

\item[{\rm (b)}] The edges of $G$ can be covered by signed
    circuits.

\item[{\rm (c)}] For each edge $e$, the graph $G-e$
    contains no balanced component.
\end{itemize}
\end{theorem}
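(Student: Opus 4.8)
The plan is to establish the cycle of implications $(b)\Rightarrow(a)\Rightarrow(c)\Rightarrow(b)$.

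\emph{$(b)\Rightarrow(a)$.} Since Kirchhoff's law is linear, a sum of flows is again a flow. Fix an orientation of $G$ and a cover $S_1,\dots,S_m$ of $E(G)$ by signed circuits. By the remark preceding the theorem each $S_i$ carries a nowhere-zero integer flow $\phi_i$ with $|\phi_i|\le 2$; extending $\phi_i$ by zero gives a flow on all of $G$ supported on $S_i$. Then $\xi=\sum_{i=1}^m N^i\phi_i$ is an integer flow, and choosing $N$ large enough (say $N=4$) makes the summand of largest index dominate on each edge, so that $\xi$ is nowhere zero.

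\emph{$(a)\Rightarrow(c)$.} I would prove the contrapositive. Suppose $G-e$ has a balanced component $H$; switch so that $H$ is all-positive, and set $S=V(H)$. As $H$ is a component of $G-e$, no edge other than $e$ has exactly one end in $S$, and every edge with both ends in $S$, apart from $e$ itself, lies in $H$ and is therefore positive. Summing the Kirchhoff equations of a hypothetical flow $\xi$ over the vertices of $S$, the positive edges inside $S$ cancel in pairs and one is left with $\pm\xi(e)=0$ when $e$ has exactly one end in $S$, and with $\pm 2\xi(e)=0$ when $e$ has both ends in $S$ (in which case $S=V(G)$ and $G=H+e$ with $e$ its only negative edge, so this is Lemma~\ref{lemma:extro}). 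In all cases $\xi(e)=0$, so $G$ has no nowhere-zero flow.

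\emph{$(c)\Rightarrow(b)$.} It suffices to show that, assuming (c), every edge $e=uv$ lies in a signed circuit; the union of the circuits obtained as $e$ ranges over $E(G)$ is then the required cover. If $e$ is a bridge, then by (c) both components of $G-e$ are unbalanced, and an unbalanced circuit in each of them, joined through $e$ by paths inside the respective components, forms a loose handcuff (a signed circuit of type~(3)) containing $e$. If $e$ lies on a balanced circuit, that circuit already is a signed circuit of type~(1). There remains the case in which $e$ is not a bridge and every circuit through $e$ is unbalanced; here $G-e$ is connected, hence unbalanced by (c).

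In this last case I would take a shortest $u$-$v$ path $P$ in $G-e$, switch $G$ so that $P$ becomes all-positive — which forces $e$ to be negative, since $e\cup P$ is an unbalanced circuit — and then contract $P$. This turns $e$ into a negative loop at the contracted vertex $w$, while $G-e$ stays connected and unbalanced after the contraction, so the contracted graph contains an unbalanced circuit $C'$ avoiding $e$. If $C'$ misses $w$, then a $w$-to-$C'$ path together with the loop $e$ and $C'$ is a loose handcuff, which pulls back to a loose handcuff through $e$ in $G$. If $C'$ passes through $w$, it pulls back to an arc of $G-e$ whose two ends lie on $P$, and the point that needs care — the one I expect to be the main obstacle — is that these two ends must coincide: if they were distinct vertices $p\ne q$ of $P$, then $e$, that arc, and the two subpaths of $P$ lying beyond $p$ and beyond $q$ would close up into a circuit through $e$ of sign $(-1)(+1)(-1)(+1)=+1$, i.e.\ a balanced circuit through $e$, contrary to the case hypothesis. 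Hence the arc reattaches to $P$ at a single vertex $p$; the closed walk it forms there (of sign $-1$) contains an unbalanced circuit $C''$ with $V(C'')\cap V(P)\subseteq\{p\}$, and $C''$ together with the unbalanced circuit $e\cup P$ — joined at $p$ when $p\in V(C'')$, and by a subpath of the arc otherwise — is a tight or loose handcuff through $e$. Ruling out the $p\ne q$ configuration is the crux; once it is excluded the remaining bookkeeping is routine.
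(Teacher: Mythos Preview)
Your cycle $(b)\Rightarrow(a)\Rightarrow(c)\Rightarrow(b)$ is correct. The implication $(a)\Rightarrow(c)$ is exactly the paper's argument, and your weighted sum $\sum N^i\phi_i$ in $(b)\Rightarrow(a)$ is the same device the paper employs (with base~$3$) inside its proof of $(c)\Rightarrow(a)$. The real divergence is the third leg. The paper does not prove $(c)\Rightarrow(b)$ at all: it delegates $(a)\Leftrightarrow(b)$ to Bouchet and Zaslavsky and instead establishes $(c)\Rightarrow(a)$ directly, by showing that every edge lies in a balanced circuit or in a \emph{weak} unbalanced bicircuit (two edge-disjoint but possibly vertex-overlapping unbalanced circuits joined by a path). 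Its key tool there is the symmetric difference $C_1\oplus C_2$, used to peel apart an unbalanced circuit through $e$ from an unbalanced circuit in $G-e$ when they share edges. Your shortest-path contraction reaches the stronger conclusion --- an honest signed circuit through $e$, not merely a weak one --- via a different mechanism: the hypothesis that no balanced circuit passes through $e$ forces the pulled-back arc to reattach to $P$ at a single vertex, and then $e\cup P$ together with that arc is a genuine tight handcuff. Your argument is therefore self-contained where the paper's is not; the paper's symmetric-difference route is a bit slicker but only delivers the weak structure, which suffices for $(a)$ but not for the covering statement~$(b)$ itself.
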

\begin{proof}
We only prove that $(a)\Leftrightarrow (c)$. For the proof of
$(a)\Leftrightarrow (b)$ see \cite{Bouchet} or
\cite{flowdecomp}.

\medskip

$(a)\Rightarrow (c)$ Let $\xi$ be a nowhere-zero integer flow
on $G$, and suppose that, for some edge~$e$, the graph $G-e$
has a balanced component $H$. Switch the signature of $G$ to
make each edge of $H$ positive. If $e$ was a bridge in $G$,
then, as in the unsigned case, the sum of outflows from the
vertices of $H$ would force $\xi(e)=0$, which is impossible.
Therefore $G-e$ is connected and $e$ is the only negative edge
of $G$. However, by Lemma~\ref{lemma:extro}, the sum of values
on the negative edges taken with the extroverted orientation is
$0$, so $\xi(e)=0$, and we have a contradiction again.

\medskip

$(c)\Rightarrow (a)$ Assume that for every edge $e$ the graph
$G-e$ contains only unbalanced components. To see that $G$ is
flow-admissible we first show that each edge of $G$ belongs to
either a balanced circuit or to a weak unbalanced bicircuit. We
define a weak unbalanced bicircuit as a signed graph consisting
of two edge-disjoint unbalanced circuits $C_1$ and $C_2$, not
necessarily vertex-disjoint, joined with a path $P$, which may
be trivial, with no edge in $C_1\cup C_2$.

Let $e$ be an arbitrary edge of $G$, and suppose that it is not
contained in a balanced circuit. If $e$ is a bridge, then both
components of $G-e$ contain an unbalanced circuit. We take one
in each component and connect the circuits with a path, thus
creating a weak unbalanced bicircuit containing~$e$. If $e$ is
not a bridge, then it is contained in an unbalanced circuit,
say $C_1$. The graph $G-e$ is connected and unbalanced, so it
contains an unbalanced circuit $C_2$. If $C_1$ and $C_2$ are
edge-disjoint, then joining $C_1$ and $C_2$ with a path yields
a weak unbalanced bicircuit containing $e$. Therefore assume
that $C_1$ and $C_2$ have an edge in common, and consider the
modulo $2$ sum $C_1 \oplus C_2$ of $C_1$ and $C_2$. Since
$C_1\oplus C_2$ is an edge-disjoint union of circuits and $e$
is contained in $C_1\oplus C_2$, there is a circuit
$D_1\subseteq C_1\oplus C_2$ containing $e$. As there is no
balanced circuit through $e$, the circuit $D_1$ is unbalanced.
However, $\sigma(C_1\oplus C_2)=+1$, so $C_1\oplus C_2$ has to
contain an unbalanced circuit $D_2$ that is edge-disjoint from
$D_1$. By connecting $D_1$ and $D_2$ with a path we obtain a
weak unbalanced bicircuit containing~$e$. Thus every edge of
$G$ belongs to either a balanced circuit of a weak unbalanced
bicircuit.

Now let $\{B_1,B_2,\ldots,B_t\}$ be a covering of the edges of
$G$ such that each $B_i$ is either a balanced circuit or a weak
unbalanced bicircuit. If $B_i$ is a balanced circuit, then it
admits a nowhere-zero $2$-flow. If $B_i$ is a weak unbalanced
bicircuit, then it is easy to see that it has a nowhere-zero
$3$-flow. In both cases, there exists a nowhere-zero $3$-flow
$\phi_i$ on each~$B_i$. Regarding each $\phi_i$ as a flow on
the entire $G$ with zero values outside $B_i$ we can form the
function $\phi=\sum_{i=1}^t3^{i-1}\phi_i$ which is easily seen
to be a nowhere-zero $3^t$-flow on $G$.
\end{proof}

We say that a signed graph $G$ is \textit{tightly unbalanced} if
it is unbalanced and there is an edge $e$ such that $G-e$ is
balanced.

\begin{corollary}\label{cor:min-unbal}
A $2$-edge-connected unbalanced signed graph is flow-admissible if
and only if it is not tightly unbalanced.
\end{corollary}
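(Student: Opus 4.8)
The plan is to derive Corollary~\ref{cor:min-unbal} directly from the equivalence $(a)\Leftrightarrow(c)$ in Theorem~\ref{thm:flow-admiss}. So let $G$ be a $2$-edge-connected unbalanced signed graph. Being $2$-edge-connected, $G$ is in particular connected, so Theorem~\ref{thm:flow-admiss} applies, and flow-admissibility is equivalent to condition~(c): for every edge $e$, the graph $G-e$ has no balanced component. The task is thus to show that, for a $2$-edge-connected unbalanced graph, condition~(c) fails precisely when $G$ is tightly unbalanced.

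First I would prove the easy direction: if $G$ is tightly unbalanced, then $G$ is not flow-admissible. By definition there is an edge $e$ with $G-e$ balanced. Since $G$ is $2$-edge-connected, $e$ is not a bridge, so $G-e$ is connected; hence $G-e$ is a single balanced component of $G-e$, and condition~(c) fails. By Theorem~\ref{thm:flow-admiss}, $G$ admits no nowhere-zero integer flow.

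For the converse, suppose $G$ is $2$-edge-connected, unbalanced, and \emph{not} tightly unbalanced; I want to verify condition~(c). Fix any edge $e$. Because $G$ is $2$-edge-connected, $G-e$ is connected, so it has a single component, namely $G-e$ itself. If that component were balanced, then $G-e$ would be balanced, making $G$ tightly unbalanced by definition --- contrary to assumption. Hence the unique component of $G-e$ is unbalanced, so condition~(c) holds. Since $e$ was arbitrary, $G$ satisfies~(c), and therefore $G$ is flow-admissible by Theorem~\ref{thm:flow-admiss}.

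I do not anticipate a genuine obstacle here: the whole argument is a short unwinding of definitions once Theorem~\ref{thm:flow-admiss} is in hand, the key point being that $2$-edge-connectivity forces $G-e$ to be connected for every edge $e$, which collapses the phrase ``no balanced component of $G-e$'' to the single statement ``$G-e$ is unbalanced''. The only thing to be mildly careful about is the edge cases --- e.g.\ that $2$-edge-connectivity is used (rather than merely $2$-vertex-connectivity or bridgelessness) only through the fact that deleting one edge keeps the graph connected, which is exactly what ``bridgeless'' already gives; but stating it as $2$-edge-connected is harmless and matches the hypothesis as written.
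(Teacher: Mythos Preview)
Your proposal is correct and is precisely the intended derivation: the paper states Corollary~\ref{cor:min-unbal} immediately after Theorem~\ref{thm:flow-admiss} without a separate proof, and your argument spells out exactly the one-line reason --- $2$-edge-connectivity makes $G-e$ connected, so condition~(c) of Theorem~\ref{thm:flow-admiss} collapses to ``$G-e$ is unbalanced for every $e$'', i.e.\ $G$ is not tightly unbalanced. There is nothing to add.
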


The following immediate corollary establishes part~(a) of our
Main Theorem.

\begin{corollary}\label{cor:euler-flow-admiss}
A signed eulerian graph is flow-admissible if and only if it is
not tightly unbalanced.
\end{corollary}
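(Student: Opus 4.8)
The plan is to reduce the statement to the two preceding corollaries by first observing that every signed eulerian graph is automatically bridgeless. Indeed, a connected graph in which every vertex has even valency cannot contain a bridge: deleting a bridge $e=uv$ would disconnect the graph into two pieces, and in the piece containing $u$ the vertex $u$ would acquire odd valency while every other vertex would retain its even valency, contradicting the handshake lemma. Hence a signed eulerian graph is connected and bridgeless, that is, $2$-edge-connected.

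I would then split into cases according to balance. If $G$ is balanced, then, as recalled in the text, a switching argument identifies flows on $G$ with flows on the underlying unsigned graph, so $G$ is flow-admissible exactly when it is bridgeless; being eulerian, $G$ is bridgeless and therefore flow-admissible. Since a balanced graph is by definition not unbalanced, it is not tightly unbalanced either, so both sides of the claimed equivalence hold and the case is settled.

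If $G$ is unbalanced, then $G$ is a $2$-edge-connected unbalanced signed graph, and Corollary~\ref{cor:min-unbal} applies verbatim: $G$ is flow-admissible if and only if it is not tightly unbalanced. (Equivalently, one can appeal directly to Theorem~\ref{thm:flow-admiss}: since $G$ has no bridge, $G-e$ is connected for every edge $e$, so condition (c) — that $G-e$ has no balanced component — simply says that $G-e$ is unbalanced for every $e$, which is precisely the negation of tight unbalancedness.) Combining the two cases yields the corollary.

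There is no genuine obstacle here; the only step needing an actual argument is the remark that an eulerian graph is bridgeless, and the rest is routine bookkeeping built on Theorem~\ref{thm:flow-admiss} and Corollary~\ref{cor:min-unbal}.
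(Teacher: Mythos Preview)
Your argument is correct and matches the paper's intent: the paper states the result as an ``immediate corollary'' of Corollary~\ref{cor:min-unbal} without further proof, and your reduction---observing that eulerian graphs are bridgeless and then splitting into the balanced and unbalanced cases---is exactly the routine verification that justifies that word.
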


We conclude this section with another result concerning
eulerian graphs that easily follows from general results
presented in this section. It is due to Xu and Zhang \cite{Xu}
and implies part (b) of Main Theorem. Here we provide a simple
proof.

Call a signed eulerian graph \textit{even} if it has an even
number of negative edges and \textit{odd} otherwise. Note that
the parity of the number of negative edges in an eulerian graph
is clearly preserved by every vertex switching. Hence, the
property of being even or odd is an invariant of the switching
class of an eulerian graph.

\begin{theorem}\label{thm:even} {\rm (Xu and Zhang \cite{Xu})}
A connected signed graph $G$ admits a nowhere-zero $2$-flow if
and only if $G$ is an even eulerian graph.
\end{theorem}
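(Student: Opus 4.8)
**Proof proposal for Theorem (Xu and Zhang): A connected signed graph $G$ admits a nowhere-zero $2$-flow if and only if $G$ is an even eulerian graph.**

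The plan is to prove both directions using the structural observations already established in this section, especially Lemma~\ref{lemma:extro} and the elementary fact that sending a value along a closed trail with an even number of negative edges preserves a flow.

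\medskip

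\emph{Necessity.} Suppose $\xi$ is a nowhere-zero $2$-flow on $G$; thus $\xi(e)\in\{+1,-1\}$ for every edge~$e$. First I would show that $G$ is eulerian. Fix a vertex $v$ and consider the Kirchhoff law there: $\sum_{e\in E^{\text{out}}(v)}\xi(e)=\sum_{e\in E^{\text{in}}(v)}\xi(e)$. Since every summand is $\pm1$, each side is an integer of the same parity as the number of edges it sums over; but the two sides are equal, so $|E^{\text{out}}(v)|$ and $|E^{\text{in}}(v)|$ have the same parity, whence $|E(v)|=|E^{\text{out}}(v)|+|E^{\text{in}}(v)|$ is even. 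As $G$ is connected, $G$ is eulerian. To see that $G$ is even, I would invoke Lemma~\ref{lemma:extro}: the sum of the values of $\xi$ on the negative edges, all taken in the extroverted orientation, is~$0$. Each such value is $\pm1$, so the number of negative edges must be even (an odd number of $\pm1$'s cannot sum to zero). Hence $G$ is an even eulerian graph.

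\medskip

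\emph{Sufficiency.} Suppose $G$ is a connected signed graph with all vertices of even degree and an even number of negative edges. Since $G$ is eulerian, it has an Eulerian circuit, i.e.\ a closed trail $W$ traversing every edge exactly once; the number of negative edges on $W$ equals the total number of negative edges of $G$, which is even. Starting from the zero valuation, I would send the value $1$ from a base vertex around $W$ back to itself. By the discussion preceding the definition of a signed circuit, because $W$ is a closed trail with an even number of negative edges, this operation yields an $A$-flow (here $A=\mathbb{Z}$); and since every edge of $G$ lies on $W$ and receives the value $\pm1$ (depending on the orientation forced by consistency of direction along $W$), the resulting flow is nowhere-zero with values in $\{+1,-1\}$. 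This is a nowhere-zero $2$-flow on~$G$, completing the proof.

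\medskip

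I do not anticipate a serious obstacle: the whole argument is a direct application of machinery already in place. The only point requiring a moment's care is the necessity of $G$ being eulerian, where one must track parities of in- and out-degrees rather than appeal to the unsigned fact directly; and one should note that the parity argument via Lemma~\ref{lemma:extro} is exactly why evenness (not merely being eulerian) is forced.
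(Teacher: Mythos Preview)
Your proposal is correct and follows essentially the same approach as the paper: sufficiency by sending the value $1$ along an eulerian trail, and necessity via Lemma~\ref{lemma:extro} for evenness together with a $\pm1$-parity count at each vertex for the eulerian property. Your argument that $|E^{\text{out}}(v)|$ and $|E^{\text{in}}(v)|$ have the same parity is a slightly more elaborate version of the paper's observation that a sum of $\pm1$'s equalling zero forces an even number of terms, but the content is identical.
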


\begin{proof}
The condition is clearly sufficient, because sending the value $1$
along any eulerian trail produces a nowhere-zero $2$-flow. For the
converse, assume that $G$ is a connected graph that admits a
nowhere-zero $2$-flow $\xi$. At every vertex, the values of all
incident edges are either $+1$ or $-1$ and must sum to zero, so
the valency is even. By Lemma~\ref{lemma:extro}, the sum of values
on negative edges with extroverted orientation is $0$. Again, all
the summands are $+1$ or $-1$, so $G$ must have an even number of
negative edges.
\end{proof}

\section{Nowhere-zero 4-flows}

In this section we show that every flow-admissible signed
eulerian graph admits a nowhere-zero $4$-flow. By
Theorem~\ref{thm:even}, this is true for all even eulerian
graphs, so we can restrict here to odd eulerian graphs. In
fact, for odd eulerian graphs we prove a stronger result
providing several equivalent statements one of which is the
existence of a nowhere-zero $4$-flow.

Before proceeding to the result we need a simple lemma. Let $G$
and $H$ be two signed graphs with intersecting vertex sets and
assume that $G\cap H$ is equally signed in both $G$ and $H$.
Then $G\cup H$ will denote the signed graph where edges inherit
their signs from $G$ and $H$. If $G$ and $H$ are balanced
graphs, we say that $G\cap H$ is a \textit{consistent} subgraph
of $G$ and $H$ whenever $G\cap H$ has a balanced bipartition
that extends to a balanced bipartition of $G$ as well as to a
balanced bipartition of $H$.

\begin{lemma}\label{lemma:union}
Let $G$ and $H$ be balanced signed graphs. If $G\cap H$ is a
consistent subgraph of $G$ and $H$, then $G\cup H$ is balanced.
\end{lemma}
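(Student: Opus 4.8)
The plan is to use Harary's Balance Theorem (Theorem~\ref{thm:bal-bipart}) to reduce the problem to combining the balanced bipartitions. Since $G\cap H$ is a consistent subgraph, by definition it has a balanced bipartition $(X_1,X_2)$ of $V(G\cap H)$ that extends to a balanced bipartition $(A_1,A_2)$ of $G$ and to a balanced bipartition $(B_1,B_2)$ of $H$, where "extends" means $A_i\supseteq X_i$ and $B_i\supseteq X_i$ after possibly relabelling. The natural candidate for a balanced bipartition of $G\cup H$ is then $(A_1\cup B_1,\; A_2\cup B_2)$. First I would verify that this is indeed a partition of $V(G\cup H)=V(G)\cup V(H)$: the only possible overlap is on $V(G)\cap V(H)$, but there a vertex lies in $A_i$ (from $G$) if and only if it lies in $X_i$ (since the bipartition of $G\cap H$ extends), and likewise it lies in $B_i$ if and only if it lies in $X_i$, so the two restrictions agree on the intersection and no vertex is placed in both classes.

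Next I would check the edge condition of Harary's theorem for every edge $e$ of $G\cup H$. Each such edge lies in $G$ or in $H$; say $e$ lies in $G$ (the case of $H$ is symmetric). Both end-vertices of $e$ are in $V(G)$, so they are classified by the partition $(A_1,A_2)$, and by assumption this classification agrees with the classification given by $(A_1\cup B_1, A_2\cup B_2)$ on all vertices of $V(G)$ — here one must note that a vertex $v\in V(G)$ which also lies in $V(H)$ is assigned the same class by $(A_1,A_2)$ and by $(B_1,B_2)$, precisely because both restrict to $(X_1,X_2)$ on $V(G\cap H)$, so no conflict arises when forming the union. Since $(A_1,A_2)$ is a balanced bipartition of $G$, the edge $e$ is negative iff its ends lie in different classes of $(A_1,A_2)$, equivalently in different classes of $(A_1\cup B_1,A_2\cup B_2)$; and $e$ is positive iff its ends lie in the same class. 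Thus the edge condition holds for every edge, and Harary's theorem yields that $G\cup H$ is balanced.

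The main point requiring care — and the only real obstacle — is making the word "extends" precise and ensuring the consistency hypothesis is used correctly, namely that \emph{the same} balanced bipartition of $G\cap H$ is the one that extends to both $G$ and $H$. If one merely knew that $G\cap H$ has \emph{some} balanced bipartition extending to $G$ and \emph{some} (possibly different) one extending to $H$, the union could fail to be balanced; the hypothesis that $G\cap H$ is a consistent subgraph rules this out. A subtlety to flag is that balanced bipartitions need not be unique when $G\cap H$ is disconnected, so one should fix once and for all a single witnessing bipartition $(X_1,X_2)$ and the corresponding extensions before forming the union. Once this bookkeeping is set up, the verification is the routine argument sketched above, and the lemma follows directly from Theorem~\ref{thm:bal-bipart}.
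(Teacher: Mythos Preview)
Your proposal is correct and follows essentially the same route as the paper: fix a single balanced bipartition $(X_1,X_2)$ of $G\cap H$ together with extensions $(A_1,A_2)$ of $G$ and $(B_1,B_2)$ of $H$, form $(A_1\cup B_1,\,A_2\cup B_2)$, and invoke Harary's Balance Theorem. The paper's proof is terser (it omits the explicit verification that the two unions are disjoint and the check for positive edges), but your version fills in exactly those details without introducing any new idea.
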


\begin{proof}
Let $G\cap H$ be a consistent subgraph of $G$ and $H$ and let
$U_1\cup U_2$ be a balanced bipartition of $G\cap H$ that can
be extended to a balanced bipartition $V_1\cup V_2$ of $G$ and
to a balanced bipartition $W_1\cup W_2$ of $H$. Without loss of
generality we may assume that $U_1\subseteq V_1$ and
$U_1\subseteq W_1$. Then every negative edge in $G\cup H$ is
between the sets $V_1\cup W_1$ and $V_2\cup W_2$ implying that
$G\cup H$ is a balanced graph.
\end{proof}

Here is the main result of this section.

\begin{theorem}\label{thm:4-flows}
The following statements are equivalent for every signed
eulerian graph $G$ with an odd number of negative edges.
\begin{itemize}
\item[{\rm (a)}] $G$ is flow-admissibble;
\item[{\rm (b)}] $G$ admits a nowhere-zero $4$-flow;
\item[{\rm (c)}] $G$ contains two edge-disjoint unbalanced
    circuits;
\item[{\rm (d)}] $G$ is a union of two even eulerian
    subgraphs;
\item[{\rm (e)}] $G$ can be decomposed into three
    edge-disjoint odd eulerian subgraphs.
\end{itemize}
\end{theorem}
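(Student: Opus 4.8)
The plan is to establish the chain of implications
$(a)\Rightarrow(c)\Rightarrow(d)\Rightarrow(e)\Rightarrow(b)\Rightarrow(a)$,
which will close the loop. The implication $(b)\Rightarrow(a)$ is trivial, since a nowhere-zero $4$-flow is in particular a nowhere-zero integer flow. For $(a)\Rightarrow(c)$ I would argue by contradiction: if $G$ has no two edge-disjoint unbalanced circuits, then all unbalanced circuits pass through a common edge (this uses that $G$ is $2$-edge-connected, which follows from flow-admissibility via Corollary~\ref{cor:euler-flow-admiss}, together with a Helly-type/menger-type observation about unbalanced circuits; alternatively, one shows $G-e$ is balanced for a suitable edge $e$, making $G$ tightly unbalanced and hence not flow-admissible by Corollary~\ref{cor:euler-flow-admiss}). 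I expect the cleanest route is: pick one unbalanced circuit $C$; if no unbalanced circuit is edge-disjoint from $C$, then since $G$ is even-valent we can still find an edge $e\in C$ such that $G-e$ has no unbalanced circuit, contradicting flow-admissibility.

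For $(c)\Rightarrow(d)$, suppose $C_1,C_2$ are edge-disjoint unbalanced circuits. The graph $H=G-E(C_1)-E(C_2)$ has all vertices of even valency (we removed two circuits), so each of its components is eulerian; moreover $C_1\cup C_2$ has an even number of negative edges (each $C_i$ contributes an odd number), while $G$ has an odd number of negative edges, so $H$ has an odd number of negative edges. Now I would distribute: form $G_1$ by taking $C_1\cup C_2$ together with some components of $H$, and $G_2$ from the remaining components of $H$ together with a connecting structure, arranging parities so that each $G_i$ is an even eulerian subgraph and $G=G_1\cup G_2$. The delicate point is connectivity — the pieces of $H$ need not touch $C_1\cup C_2$, so I would instead first decompose $G$ into closed trails through a fixed vertex and regroup, using the evenness freely; the key arithmetic fact is that one odd eulerian graph plus adding $C_1$ (odd) gives even, and we have two unbalanced circuits to play with to fix both halves.

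For $(d)\Rightarrow(e)$: if $G=G_1\cup G_2$ with $G_1,G_2$ even eulerian, consider $G_1\cap G_2=:K$ and $G_1\oplus G_2$. Since $G_1,G_2$ are even and $G$ is odd, $K$ is odd; also $K$ has even valency at each vertex (intersection of two even-valent subgraphs at a shared vertex — this needs a small argument, or one passes to edge-disjoint pieces $G_1\setminus K$, $G_2\setminus K$, $K$). Then $G_1\setminus K$ and $G_2\setminus K$ are even eulerian (or unions of such) and $K$ is odd eulerian; regrouping and splitting further using Theorem~\ref{thm:even}-type parity bookkeeping yields three odd eulerian pieces. Again the main nuisance is connectivity of the pieces, handled by merging components through a common vertex where possible and otherwise inserting doubled paths whose parity contribution is even.

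For $(e)\Rightarrow(b)$: let $G=H_1\cup H_2\cup H_3$ be a decomposition into edge-disjoint odd eulerian subgraphs. On each $H_i$ send the value $1$ along an eulerian trail, obtaining a nowhere-zero $\mathbb{Z}_2$-valued assignment $\phi_i$ that fails Kirchhoff's law only at the ends of the trail — but since $H_i$ is a closed trail with an odd number of negative edges, $\phi_i$ is actually a nowhere-zero flow valued in $\{+1,-1\}$ on $H_i$ except it satisfies Kirchhoff everywhere (a closed odd trail supports a $\pm1$ flow after sending $1$ along it). More precisely I would use the construction from the paragraph preceding Theorem~\ref{thm:flow-admiss}: send $1$ along the closed trail $H_i$; the continuity condition holds at every internal vertex and, because the number of negative edges is odd, one must be slightly more careful — instead I combine two of the three trails. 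The standard trick: with three such flows $\phi_1,\phi_2,\phi_3$ taking values in $\{+1,-1\}$, the combination $\phi_1+2\phi_2$ on $H_1\cup H_2$ together with an adjustment on $H_3$ gives values in $\{\pm1,\pm2,\pm3\}$; the point is to show the total is a genuine $\mathbb{Z}$-flow with no zero value, i.e.\ a nowhere-zero $4$-flow. The main obstacle in the whole theorem is engineering the decompositions in $(c)\Rightarrow(d)\Rightarrow(e)$ while keeping every piece connected (hence genuinely eulerian); I expect this to be resolved by the device of routing through a common vertex and padding with even-parity doubled paths, so that all the content is really in the parity arithmetic rather than in topology.
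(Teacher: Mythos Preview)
Your proposal has two substantial gaps and an unfortunate ordering of the implications.

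\textbf{The implication $(a)\Rightarrow(c)$ is the heart of the theorem, and you have not proved it.} Neither of your suggested routes works as stated. The claim that ``if no two unbalanced circuits are edge-disjoint then they all pass through a common edge'' is not a general Helly-type fact and requires real work; likewise, given one unbalanced circuit $C$ and the hypothesis that every unbalanced circuit meets $C$, there is no evident reason why some single edge $e\in C$ should lie in \emph{all} unbalanced circuits. The paper's proof of $(a)\Rightarrow(c)$ occupies most of the argument: one fixes an unbalanced circuit $N$, switches so that $G_0=G-E(N)$ is all-positive, and then carefully analyses the ``segments'' of $N$ cut out by each component of $G_0$. When $G_0$ is connected one shows there are at least three negative segments (otherwise $G$ would be tightly unbalanced), and from two of them one manufactures two edge-disjoint unbalanced closed trails via a splitting of an eulerian trail of $G_0$. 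When $G_0$ is disconnected one needs a further covering argument by ``cosegments'' (Claims~2 and~3) before explicitly writing down two edge-disjoint unbalanced closed trails. None of this is visible in your sketch.

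\textbf{Your chain order makes $(e)\Rightarrow(b)$ and $(c)\Rightarrow(d)$ hard, whereas the paper's order $(c)\Rightarrow(e)\Rightarrow(d)\Rightarrow(b)$ makes the corresponding steps easy.} Your $(e)\Rightarrow(b)$ breaks down exactly where you hesitate: sending $1$ along a closed trail with an \emph{odd} number of negative edges does \emph{not} satisfy Kirchhoff's law at the base vertex, so the $\phi_i$ are not flows and the combination $\phi_1+2\phi_2$ is not a flow either. The paper instead proves the trivial $(e)\Rightarrow(d)$ (if $G=G_1\cup G_2\cup G_3$ with each $G_i$ odd eulerian and $G_1\cap G_2\ne\emptyset\ne G_2\cap G_3$, then $G_1\cup G_2$ and $G_2\cup G_3$ are two \emph{even} eulerian subgraphs covering $G$), and then $(d)\Rightarrow(b)$ is immediate: each even eulerian subgraph carries a genuine nowhere-zero $2$-flow by Theorem~\ref{thm:even}, and $\phi_1+2\phi_2$ is a nowhere-zero $4$-flow. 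For $(c)\Rightarrow(e)$ the paper extends $\{C_1,C_2\}$ to a full circuit decomposition $\mathcal K$ (which then contains at least three unbalanced circuits, since $G$ is odd), forms the incidence graph of $\mathcal K$, and proves a clean tree-partition lemma (Claim~4) that splits $\mathcal K$ into three connected parts each containing an odd number of unbalanced circuits. This sidesteps entirely the connectivity ``nuisance'' you flag; your device of ``padding with even-parity doubled paths'' would change the edge set of $G$ and is not available. Finally, in your $(d)\Rightarrow(e)$ sketch the parity is miscomputed: with $K=G_1\cap G_2$ odd, the pieces $G_1\setminus K$ and $G_2\setminus K$ are \emph{odd}, not even (which incidentally is what you want, but your argument as written is inconsistent), and their connectedness is not established.
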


\begin{proof}

$(a)\Rightarrow (c)$ Assume that $G$ is a flow-admissible odd
eulerian graph. We show that $G$ contains two edge-disjoint
unbalanced circuits.

Choose any unbalanced circuit $N$ of $G$, and set $G_0=G-E(N)$.
If $G_0$ is unbalanced, then it contains an unbalanced circuit,
which together with $N$ provides two required edge-disjoint
unbalanced circuits of $G$. Henceforth we can assume that $G_0$
is balanced.

Let us switch the signature of $G$ in such a way that $G_0$
becomes all-positive, and consider a fixed component $M$ of
$G_0$. The vertices of $M$ divide the circuit $N$ into
\textit{segments}, pairwise edge-disjoint paths whose first and
last vertex is in $M$ and all inner vertices lie outside~$M$. A
\emph{multisegment} is a portion of $N$ formed by a chain of
segments. Depending on the product of signs, segments can be
either positive or negative. Observe that if a segment $J$ is
negative, then $M\cup J$ is unbalanced, and vice versa.

\medskip\noindent
Claim 1. \textit{Every component of $G_0$ determines an odd
number of negative segments on $N$.}

\medskip\noindent
Proof of Claim 1. The product of signs of all segments
determined by a component of $G_0$ clearly equals the sign of
$N$. Since $N$ is unbalanced, the number of negative segments
must be odd. Claim~1 is proved.

\medskip

We now consider two cases.

\medskip

\textbf{Case 1.} \textit{$G_0$ is connected.} By Claim 1, $G_0$
determines at least one negative segment on~$N$. Let $S_1,
S_2,\ldots, S_k$ be all the segments of $N$, negative or not.
Suppose first that only one of them is negative, say $S_1$.
Then for each $S_i$ with $i\ge 2$ the graph $G_0\cup S_i$ is
balanced, and hence, by Lemma~\ref{lemma:union}, the graph
$G_0\cup S_2\cup S_3\ldots \cup S_k$ is also balanced. Since
$G_0\cup S_1$ is unbalanced, every unbalanced circuit in $G$
traverses $S_1$. But then for every edge $e$ of $S_1$ the graph
$G-e$ is balanced, contradicting Corollary~\ref{cor:min-unbal}.

Thus $G_0$ determines at least three negative segments on $N$.
We pick two of them, say $S_i$ and $S_j$. Clearly, there is an
unbalanced circuit $C_i$ in $G_0\cup S_i$ and an unbalanced
circuit $C_j$ in $G\cup S_j$. Nevertheless, $C_i$ and $C_j$ may
have a common edge within $G_0$. In order to handle this
problem, we extend $S_i$ and $S_j$ into negative multisegments
$S_i^+$ and $S_j^+$, respectively, with a vertex in common and
construct two unbalanced edge-disjoint circuits by utilising
$S_i^+$ and $S_j^+$ rather than $S_i$ and $S_j$.

Clearly, $S_i$ and $S_j$ have at most one vertex in common. If
$S_i$ and $S_j$ do have a vertex in common, we can set
$S_i^+=S_i$ and $S_j^+=S_j$. Otherwise we can express $N$ as
$S_iU_1S_jU_2$ for suitable multisegments $U_1$ and $U_2$. If
any of them, say $U_1$, is negative, then we set $S_i^+=S_i$
and $S_j^+=U_1$. So we may assume that both $U_1$ and $U_2$ are
positive, and then we may set $S_i^+=S_i$ and $S_j^+=U_1S_j$.
In each case we have found negative multisegments $S_i^+$ and
$S_j^+$ sharing precisely one common vertex.

Let $a$ and $b$ be the end-vertices of $S_i^+$ and let $b$ and
$c$ the end-vertices of~$S_j^+$. Since $G_0$ is eulerian, it
can be traversed by an eulerian trail $T$. The trail $T$
encounters each of vertices $a$, $b$, and $c$ at least once. It
follows that $T=T_1T_2T_3$ where $T_1$ is an
\mbox{$a$-$b$}-subtrail and $T_2$ is an $b$-$c$-subtrail of
$T$. Then $S_i^+T_1\inv $ and $S_j^+T_2\inv$ are two
edge-disjoint unbalanced closed trails. Each of them contains
an unbalanced circuit, so $G$ contains two edge-disjoint
unbalanced circuits.

\medskip

\textbf{Case 2.} \textit{$G_0$ is disconnected.} If $G_0$ has a
component $M$ that produces at least three negative segments,
we proceed as in Case 1. We may therefore assume that each
component of $G_0$ determines exactly one negative segment on
$N$.

If $G_0$ has two components $M_1$ and $M_2$ that determine
disjoint negative segments $S_1$ and $S_2$, respectively, then
each of $M_1\cup S_1$ and $M_2\cup S_2$ contains an unbalanced
circuit, and we are done. Consequently, we may assume that the
negative segments coming from any two components of $G_0$
intersect. Clearly, their intersection will consist of either
one or two paths. Since distinct components are disjoint, these
paths must be nontrivial. Let $M_1, M_2, \ldots, M_n$ be the
components of $G_0$, let $S_j$ be the negative segment of $N$
determined by $M_j$, and for $j=1,2,\ldots, n$ let $S_j'$ be
the complementary path on $N$ with the same end-vertices as
$S_j$. We call $S_j'$ the \textit{cosegment} of $M_j$.

\medskip\noindent
Claim 2. \textit{The cosegments cover all of $N$.}

\medskip\noindent
Proof of Claim 2. Suppose, to the contrary, that there exists
an edge $e$ of $N$ that does not belong to any cosegment. Then
$e$ belongs to every segment $S_j$ for $i=1,2,\ldots, k$. Let
$Q_j=(M_j\cup S_j)-e$ and let $R_j=Q_1\cup Q_2\cup\ldots\cup
Q_j$. Clearly, each $Q_j$ is balanced. By induction on $j$ we
next show that each $R_j$ is balanced, and using this fact we
derive a contradiction. Since $R_1=Q_1$, the basis of induction
is verified. Assume inductively that $R_j$ is balanced for some
$j$ with $1\le j\le n-1$. To prove that $R_{j+1}$ is balanced
we apply Lemma~\ref{lemma:union} to the graphs $R_j$ and
$Q_{j+1}$. Obviously, $Q_{j+1}\cap R_j$ is equally signed in
both $Q_{j+1}$ and~$R_j$. Furthermore, $Q_{j+1}\cap R_j$ is
contained in $N-e$ and consists of two paths, each having one
end-vertex of the edge $e$. Since $R_j$ and $Q_{j+1}$ are
balanced but $R_j+e$ and $Q_{j+1}+e$ are not, the ends of $e$
in both $R_j$ and $Q_{j+1}$ either belong to the same partite
set or to different partite sets. This immediately implies that
$Q_{j+1}\cap R_j$ is a consistent subgraph of $Q_{j+1}$ and
$R_j$, and by Lemma~\ref{lemma:union}, $Q_{j+1}\cup
R_j=R_{j+1}$ is a balanced graph. Thus $R_j$ is balanced for
each $j=1,2,\ldots, n$. This means, however, that $G-e=R_n$ is
balanced, contradicting Corollary~\ref{cor:min-unbal}. The
proof of Claim~2 is complete.

\medskip\noindent
Claim 3. \textit{Let $\mathcal{S}'$ be a minimal covering of
$N$ by cosegments. Then}
\begin{itemize}
\item[(i)] \textit{every component of the intersection of
    any two cosegments is a nontrivial path;}

\item[(ii)] \textit{each edge of $N$ is covered by either
    one cosegment or by two cosegments.}
\end{itemize}

Proof of Claim 3. Part (i) is trivial. To prove (ii) suppose,
to the contrary, that there exists an edge of $N$ that belongs
to three cosegments $J'$, $K'$, and $L'$ from $\mathcal{S}'$.
Then $J'\cup K'\cup L'$ is either a path or the whole of $N$.
In the former case, the end-vertices are in at most two of
$J'$, $K'$, and $L'$. The remaining cosegment is therefore
contained in the union of the other two, implying the covering
$\mathcal{S}'$ is not minimal. If $J'\cup K'\cup L'=N$ and,
say, $J'\cup K'\ne N$, then $L'$ must have a disconnected
intersection with one of $J'$ and $K'$, say~$J'$. But then
$K'\subseteq J'\cup L'$, again contradicting the minimality
of~$\mathcal{S}'$. This proves Claim~3.

\medskip

Fix a cyclic ordering of the vertices of $N$ and let
$\mathcal{S}'=\{ S_1', S_2',\ldots, S_m'\}$ where
$S_i'=N[u_i,v_i]$ is the portion of $N$ with end-vertices $u_i$
and $v_i$ following this cyclic ordering for each
$i\in\{1,2,\ldots,m\}$. By Claim~2, we can assume that the
members of $\mathcal{S}'$ are arranged in such a way that each
$S_i'$ only intersects its predecessor $S_{i-1}'$ and its
successor $S_{i+1}'$, and the vertices $u_i$ and $v_i$ occur on
$N$ in the cyclic ordering
$u_1,v_m,u_2,v_1,\ldots,u_m,v_{m-1},u_1$.

We now construct two edge-disjoint unbalanced circuits in $G$.
Consider an arbitrary component $M_i$ with
$i\in\{1,2,\ldots,m\}$. We can arrange the edges of each $M_i$
into two edge-disjoint $u_i$-$v_i$-trails $X_i$ and $Y_i$ such
that $X_iY_i\inv$ is an eulerian trail of~$M_i$. Since $M_i\cup
S_i'$ is balanced, we have
\begin{eqnarray}
\sigma(X_i)=\sigma(Y_i)=\sigma(S_i')=\sigma(N[u_i,v_i]).\label{eq:1}
\end{eqnarray}
If $m$ is even, the following two closed trails $T_1$ and $T_2$
in $G$ are clearly edge-disjoint:
\begin{align*}
T_1&=X_1N[v_1,u_3]X_3 N[v_3, u_5]\ldots X_{m-1}
N[v_{m-1},u_1],\\
T_2&=X_2N[v_2,u_4]X_4N[v_4,u_6]\ldots X_{m}
N[v_{m},u_2].
\end{align*}
From (\ref{eq:1}) we get
\begin{align*}
\sigma(T_1)&=\sigma(N[u_1,v_1])\sigma(N[v_1,u_3])\sigma(N[u_3,v_3])
             \ldots\sigma(N[u_{m-1},v_{m-1}])\sigma(N[v_{m-1},u_1])\\
           &=\sigma(N)=-1,
\end{align*}
and similarly $\sigma(T_2)=-1$. Hence both $T_1$ and $T_2$ are
unbalanced, and therefore each of them contains an unbalanced
circuit. This provides the two required edge-disjoint
unbalanced circuits in $G$.

If $m$ is odd, we have the following two edge-disjoint closed
trails $T_1$ and $T_2$ in $G$:
\begin{align*}
T_1&=X_1N\inv[v_1,u_2]X_2N[v_2,u_4]X_4\ldots
X_{m-1}N[v_{m-1},u_1],\\
T_2&=Y_2N\inv[v_2,u_3]X_3N[v_3,u_5]X_5\ldots
X_{m}N[v_{m},u_2].
\end{align*}
From (\ref{eq:1}) we obtain
\begin{align*}
\sigma(X_1)&=\sigma(N[u_1,v_1])=\sigma(N[u_1,u_2])\sigma(N[u_2,v_1]),\\
\sigma(X_2)&=\sigma(N[u_2,v_2])=\sigma(N[u_2,v_1])\sigma(N[v_1,v_2]).
\end{align*}
Hence
\begin{eqnarray}
\sigma(X_1N_1\inv[v_1,u_2]X_2)=\sigma(N[u_1,u_2])\sigma(N[u_2,v_1])
\sigma(N[v_1,v_2])=\sigma(N[u_1,v_2]),\label{eq:2}
\end{eqnarray}
and analogously,
\begin{eqnarray}
\sigma(Y_2N\inv[v_2,u_3]X_3)=\sigma(N[u_2,v_3]).\label{eq:3}
\end{eqnarray}
Employing equations (\ref{eq:1})-(\ref{eq:3}) in a similar
fashion as for $m$ even we derive that
$\sigma(T_1)=\sigma(T_2)=-1$. Therefore each of $T_1$ and $T_2$
contains an unbalanced circuit, providing two edge-disjoint
unbalanced circuits in $G$, as required. This completes the
proof of $(a)\Rightarrow (c)$.

\medskip

$(c)\Rightarrow (e)$ Let $G$ be an odd eulerian graph
containing two edge-disjoint unbalanced circuits. We show that
it can be decomposed into three edge-disjoint odd eulerian
subgraphs.

Clearly, $G$ admits a circuit decomposition $\mathcal{K}$ that
contains the two unbalanced circuits as its members. Since $G$
is odd, the decomposition $\mathcal{K}$ will have an odd number
of unbalanced circuits, and therefore at least three unbalanced
circuits. Let us consider the incidence graph $J(\mathcal{K})$
of~$\mathcal{K}$; its vertices are the elements of
$\mathcal{K}$ and edges join pairs of elements that have a
vertex of $G$ in common. Since $G$ is connected, so is
$J(\mathcal{K})$.

It is obvious that every connected induced subgraph of
$J(\mathcal{K})$, with vertex set a subset
$\mathcal{L}\subseteq\mathcal{K}$, uniquely determines an
eulerian subgraph of $G$. The latter subgraph will have an odd
number of negative edges whenever $\mathcal{L}$ contains an odd
number of unbalanced circuits. Thus to finish the proof it is
enough to show that $\mathcal{K}$ can be partitioned into three
subsets, each containing an odd number of unbalanced circuits
and each inducing a connected subgraph of~$J(\mathcal{K})$. In
fact, we may assume that $J(\mathcal{K})$ is a tree as the
general case follows immediately with the partition of
$\mathcal{K}$ obtained from a spanning tree of
$J(\mathcal{K})$.

Thus, let $J(\mathcal{K})=T$ be a tree. We may think of the
vertices of $T$ as being coloured in two colours:
\textit{black}, if the corresponding circuit in $\mathcal{K}$
is unbalanced, and \textit{white}, if the corresponding circuit
is balanced. In this terminology, it remains to prove the
following.

\medskip\noindent
Claim~4. \textit{Let $T$ be a tree whose vertices are
partitioned into two subsets, white vertices and black
vertices, such that the number of black vertices is odd and at
least $3$. Then the vertex set of $T$ can be partitioned into
three subsets such that each contains an odd number of black
vertices and induces a subtree of $T$.}

\medskip\noindent
Proof of Claim~4. We proceed by induction on the number of
vertices of $T$. The conclusion is obvious if $T$ has only
three vertices, each of them black. This constitutes the basis
of induction. For the induction step we assume that $T$ has
four or more vertices. It follows that at least two of them,
say $v_1$ and $v_2$, are leaves of $T$. If both $v_1$ and $v_2$
are black, then the set $\{\{v_1\},\{v_2\},V(T)-\{v_1,v_2\}\}$
is the required decomposition. If one of them is white,
say~$v_1$, then by the induction hypothesis $T-v_1$ has the
required decomposition $\{V_1, V_2, V_3\}$. One of these sets,
say $V_1$, contains a neighbour of $v_1$, and then
$\{V_1\cup\{v_1\}, V_2, V_3\}$ is the required decomposition
for $T$. This concludes the induction step and as well as the
proof the implication $(c)\Rightarrow (e)$.

\medskip

$(e)\Rightarrow (d)$ Assume that $G$ has a decomposition
$\{G_1,G_2,G_3\}$ into three odd eulerian subgraphs. Without
loss of generality we may assume that $G_1$ and $G_2$ share a
vertex $u$ and that $G_2$ and $G_3$ share a vertex $v$;
possibly $u=v$. Then $G_1\cup G_2$ and $G_2\cup G_3$ are even
eulerian subgraphs that cover $G$.

\medskip

$(d)\Rightarrow (b)$ Assume that $G$ is an odd eulerian graph
that has a covering $\{H_1,H_2\}$ by two even eulerian
subgraphs. By Theorem~\ref{thm:even}, there exists a
nowhere-zero $2$-flow $\phi_1$ on $H_1$ and a nowhere-zero
$2$-flow $\phi_2$ on $H_2$. Regarding each $\phi_i$ as a flow
on the entire $G$ with zero values outside $H_i$, we can set
$\phi=\phi_1+2\phi_2$. It is obvious that $\phi$ is a
nowhere-zero $4$-flow on $G$.

\medskip

$(b)\Rightarrow (a)$ This implication is trivial.
\end{proof}

Theorem~\ref{thm:4-flows} and
Corollary~\ref{cor:euler-flow-admiss} have the following
interesting consequence.

\begin{corollary}
Let $G$ be an signed eulerian graph with an odd number of
negative edges. If any two unbalanced circuits of $G$ have an
edge in common, then there exists an edge $e$ which is
contained in all unbalanced circuits. In particular, $G$ is
tightly unbalanced.
\end{corollary}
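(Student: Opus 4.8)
The plan is to read the corollary off the equivalences already established, with one small preliminary observation. First I would note that an eulerian graph with an odd number of negative edges is automatically unbalanced: as remarked just before Theorem~\ref{thm:even}, the parity of the number of negative edges in an eulerian graph is preserved by switching, while the all-positive signature has zero — hence an even number of — negative edges, so an odd eulerian graph cannot be switching-equivalent to the all-positive one. In particular $G$ has at least one unbalanced circuit, so the hypothesis is not vacuous.

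Next I would translate the hypothesis. Saying that every two unbalanced circuits of $G$ share an edge is precisely saying that $G$ does \emph{not} contain two edge-disjoint unbalanced circuits, i.e.\ condition (c) of Theorem~\ref{thm:4-flows} fails for $G$. Since $G$ is an odd eulerian graph, that theorem applies and yields the equivalence (a)$\Leftrightarrow$(c); hence condition (a) fails as well, that is, $G$ is not flow-admissible. Now Corollary~\ref{cor:euler-flow-admiss} states that a signed eulerian graph is flow-admissible exactly when it is not tightly unbalanced, so $G$ must be tightly unbalanced. By definition this means $G$ is unbalanced (which we already knew) and there exists an edge $e$ with $G-e$ balanced.

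Finally I would extract the edge $e$ claimed in the statement: since $G-e$ is balanced, every circuit of $G$ avoiding $e$ is a circuit of $G-e$ and hence balanced; therefore every unbalanced circuit of $G$ must pass through $e$. This is the required edge, and the ``in particular'' clause is then immediate.

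I do not anticipate any genuine obstacle: the corollary is essentially a one-line consequence of combining Theorem~\ref{thm:4-flows} with Corollary~\ref{cor:euler-flow-admiss}. The only point requiring a little care is the preliminary remark that ``odd'' forces ``unbalanced'', so that these results are applicable and the edge $e$ furnished by tight unbalance indeed lies on every unbalanced circuit of $G$.
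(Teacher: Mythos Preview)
Your proposal is correct and follows exactly the route the paper intends: the corollary is presented without proof as an immediate consequence of Theorem~\ref{thm:4-flows} combined with Corollary~\ref{cor:euler-flow-admiss}, and your argument spells out precisely this deduction, including the small check that an odd eulerian graph is necessarily unbalanced so that the conclusion is not vacuous.
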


\section{Nowhere-zero 3-flows}

The aim of this section is to establish part (c) of our Main
Theorem. We have to show that a signed eulerian graph $G$ has
flow number three if and only if it can be decomposed into
three odd eulerian subgraphs $G_1$, $G_2$, and $G_3$ that have
a vertex in common. If $G$ has such a decomposition, we say
that it is \textit{triply odd}. The decomposition
$\{G_1,G_2,G_3\}$ will itself be called \textit{triply odd}. 
Obviously, a triply odd signed eulerian graph is odd.

It is immediate that a signed eulerian graph is triply odd if
and only if its edges can be arranged into three unbalanced
closed trails originating at the same vertex. The following
fact is a direct consequence of this observation.

\begin{proposition}\label{prop:3-sufficiency}
Let $G$ be a triply odd signed eulerian graph. Then $\Phi(G)=3$.
\end{proposition}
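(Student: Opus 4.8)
The plan is to prove the two bounds $\Phi(G)\ge 3$ and $\Phi(G)\le 3$ separately. Since a triply odd signed eulerian graph is odd, Theorem~\ref{thm:even} immediately gives that $G$ has no nowhere-zero $2$-flow, whence $\Phi(G)\ge 3$. All the work therefore goes into exhibiting a concrete nowhere-zero $3$-flow.

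To do this I would first pass to the reformulation stated just before the proposition: from a triply odd decomposition $\{G_1,G_2,G_3\}$ with common vertex $v$ one obtains three edge-disjoint closed trails $W_1,W_2,W_3$, each starting at $v$ and traversing exactly the edges of the corresponding $G_i$; each $W_i$ is unbalanced because $\sigma(W_i)=\sigma(G_i)=-1$, and together they cover all of $E(G)$. The construction will consist of sending a value along each $W_i$ from $v$, in the sense introduced in the earlier sections, and superposing the results; since the $W_i$ are edge-disjoint the three orientations and the three value assignments do not interfere.

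The technical heart is the following refinement of the remark made when ``sending a value'' was introduced: if $W$ is a closed trail through $v$ carrying an \emph{odd} number of negative edges, oriented so that its initial edge leaves $v$ and so that $W$ is consistently directed at every internal vertex, then after sending a value $b$ along $W$ the Kirchhoff law holds at every vertex different from $v$, while at $v$ the outflow exceeds the inflow by exactly~$2b$. To see this one tracks, edge by edge along $W$, which half-edge at each successive vertex points inward; compatibility forces this ``in/out state'' to flip exactly at the negative edges, so after an odd number of flips the half-edge of the last edge at $v$ points \emph{outward} rather than inward. Hence the first and the last edge of $W$ both contribute $+b$ to the outflow at $v$ (when $W$ is a single negative loop, made extroverted, the same count applies), whereas any intermediate passage of $W$ through $v$ contributes one inflow and one outflow and cancels. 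Now apply this with $b=1$ to $W_1$, with $b=1$ to $W_2$, and with $b=-2$ to $W_3$: every edge of $W_1\cup W_2$ receives the value $1$ and every edge of $W_3$ the value $-2$, so the superposed valuation is nowhere zero and takes values in $\{\pm1,\pm2\}$; the Kirchhoff law holds at every $w\ne v$ by the previous paragraph and at $v$ because the three discrepancies sum to $2\cdot 1+2\cdot 1+2\cdot(-2)=0$. This yields a nowhere-zero $3$-flow, so $\Phi(G)\le 3$, and combined with $\Phi(G)\ge 3$ we conclude $\Phi(G)=3$.

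The only delicate point I anticipate is making the half-edge bookkeeping in the third paragraph watertight: one has to check that the consistent orientation of each closed trail is genuinely well defined even when the trail revisits a vertex, and one has to pin down the sign of the discrepancy at $v$ (that it equals $+2b$ for the chosen initial orientation) so that the three contributions add up rather than partially cancel. Everything else is a routine application of the flow-building techniques and of Theorem~\ref{thm:even} from the earlier sections.
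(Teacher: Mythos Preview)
Your proof is correct and follows exactly the same route as the paper: send $1$, $1$, $-2$ from the common vertex $v$ along eulerian trails of $G_1,G_2,G_3$, then invoke Theorem~\ref{thm:even} for the lower bound. The paper dismisses the Kirchhoff-law check at $v$ with a single ``clearly'', whereas you spell out the $+2b$ discrepancy for an unbalanced closed trail; your additional bookkeeping is accurate and does no harm.
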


\begin{proof}
Let $\{G_1, G_2, G_3\}$ be a triply odd decomposition of $G$
where $G_1$, $G_2$, and $G_3$ share a vertex $v$. For
$i\in\{1,2,3\}$ let $T_i$ be an eulerian trail in $G_i$
starting at $v$. If we send from $v$ the value $1$ along $T_1$
and $T_2$, and the value $-2$ along $T_3$, the resulting
valuation will clearly be a nowhere-zero $3$-flow on $G$. Since
$G$ is odd, Theorem~\ref{thm:even} implies that
$\Phi(G)\ge 3$. Hence $\Phi(G)=3$.
\end{proof}

The rest of this section is devoted to proving the reverse
implication. The proof has two main ingredients: a reduction of
the general case to antibalanced $6$-regular graphs and a
verification that the result holds for connected antibalanced
$6$-regular graphs with an odd number of edges (or
equivalently, with an odd number of vertices). The latter fact
is nontrivial and can be derived from the following theorem
which is the main result of~\cite{6-reg}.

\begin{theorem}~\label{thm:6regular}
Every connected $6$-regular graph of odd order can be
decomposed into three eulerian subgraphs sharing a vertex such
that each of them has an odd number of edges.
\end{theorem}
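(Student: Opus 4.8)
The plan is to split the task cleanly into an \emph{easy parity part} and a \emph{hard connectivity part}. Since $G$ is $6$-regular on an odd number $n$ of vertices, $|E(G)|=3n$ is odd, and $G$ cannot be bipartite, because a bipartite $6$-regular graph has equal parts and hence even order; so $G$ contains odd circuits, a fact I will want later. For the parity, I would invoke Petersen's theorem: every $2k$-regular graph decomposes into $k$ $2$-factors, so $E(G)=F_1\cup F_2\cup F_3$ with each $F_i$ spanning and $2$-regular. Each $F_i$ then has exactly $n$ edges, which is \emph{odd for free}, and, being spanning, contains whatever vertex $v$ we nominate as the common one. Thus if each $F_i$ happened to be connected (a Hamilton cycle) we would already be done. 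In general the $F_i$ are unions of circuits, and we must make them connected; note we cannot hope to keep them $2$-regular, since a $6$-regular graph need not decompose into Hamilton cycles (take one with a $2$-edge cut), so we are forced to allow the final pieces to be eulerian but not $2$-regular. (Alternatively one can get three eulerian subgraphs through $v$ at once, by cutting an eulerian circuit of $G$ at its three passages through $v$, but then the three size-parities must be repaired afterwards, so Petersen is the more convenient starting point.)

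For the connectivity repair I would combine an extremal argument with a \emph{toggle} operation. Among all partitions $\{G_1,G_2,G_3\}$ of $E(G)$ into three even subgraphs, each of odd size and each containing $v$ (these exist by the Petersen step), pick one minimising the total number of connected components, and claim it has all three pieces connected, hence eulerian. The toggle is this: given a closed trail $T$ lying inside the union $G_i\cup G_j$ of two of the pieces, switch the membership $G_i\leftrightarrow G_j$ of every edge of $T$. Since $T$ has even degree at each vertex, it contains at that vertex equally many edges of $G_i$ and of $G_j$ modulo $2$, so all degrees of $G_i$ and $G_j$ stay even; and $|G_i|$ and $|G_j|$ each change by an amount congruent to $|E(T)|$ modulo $2$, so an \emph{even-length} toggle preserves the odd-size property. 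Hence the engine of the proof is the lemma: \textbf{if some piece is disconnected, there is an even-length closed trail, contained in the union of two pieces, whose toggle strictly reduces the number of components while keeping every piece incident with $v$.} Iterating this lemma down to three components finishes the proof, and the non-bipartiteness of $G$ is kept in reserve: it supplies odd circuits, so that if an odd-length toggle is ever forced, a second one can be paired with it to restore the parities.

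I expect the lemma to be the whole difficulty, and this is presumably why a separate paper~\cite{6-reg} is devoted to it. Concretely, given a component $H$ of a disconnected piece, say $G_3$, one finds the edges of $G_1\cup G_2$ leaving $V(H)$ (they exist because $G$ is connected) and must grow from them a closed trail $T$ lying in $G_1\cup G_3$ or $G_2\cup G_3$ that enters $H$ and also another $G_3$-component, so that the toggle merges them — and one must do this while controlling $|E(T)|\bmod 2$ and making sure $v$ retains an edge in each of the (at most two) affected pieces. These requirements pull against one another, and reconciling them forces a case analysis on how the three pieces interleave across a minimal edge-cut isolating $H$; it is exactly here that the rigidity of $6$-regularity (every vertex meeting precisely six edges, so every cut and every local configuration is tightly constrained) together with the guaranteed odd circuits are the tools that make the cases close. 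The rest — the Petersen reduction and the toggle bookkeeping — is routine by comparison.
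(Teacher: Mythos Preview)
The paper does not prove this theorem. It is quoted as the main result of the separate manuscript~\cite{6-reg} and used here as a black box in the proof of Theorem~\ref{thm:3-flows}. So there is no proof in this paper to compare your attempt against.

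As for the attempt itself: what you have written is a plan, not a proof, and you say so yourself. The Petersen step and the toggle bookkeeping (even degrees preserved, size parity preserved by even-length toggles) are correct as stated. But your boldfaced lemma carries the entire content of the theorem, and you do not prove it. Two concrete obstacles you would have to overcome: first, a toggle that merges two components of $G_3$ can at the same time disconnect $G_i$ or $G_j$, so the extremal choice ``minimise the total number of components'' gives no immediate contradiction --- you would need a toggle that is guaranteed to lower the \emph{total}, and nothing in your sketch produces one; second, the side conditions (even length, and $v$ keeps an edge in every affected piece) are genuine constraints that can rule out the obvious candidate trails, and your fallback of ``pair two odd toggles'' has the same problem, since the second odd toggle must again avoid disconnecting something and must again respect~$v$. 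None of this says your outline is the wrong shape --- it may well be how~\cite{6-reg} proceeds --- but as written it isolates the difficulty without resolving it.
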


As regards the reduction procedure, we begin with analysing 
signed eulerian graphs that carry a special type of a $3$-flow
under which the same edge-value does not enter and
simultaneously leave any vertex. The formal definition uses an
orientation where each edge is assigned the direction with
positive flow value. We call this orientation a
\textit{positive orientation of} $G$ \textit{with respect to} a
given nowhere-zero flow. Now let $\phi$ be a nowhere-zero
$3$-flow on a signed eulerian graph $G$, and let $G$ be
positively oriented with respect to $\phi$. We say that $\phi$
is \textit{stable at a vertex $v$} if for any two edges $e$ and
$f$ incident with $v$ such that $\phi(e)=\phi(f)$, either both
$e$ and $f$ are directed towards $v$ or both are directed out
of~$v$. A nowhere-zero $3$-flow $\phi$ is said to be
\textit{stable} if it is stable at every vertex.

\begin{lemma}~\label{lemma:stableflow}
Let $G$ be a signed eulerian graph that admits a stable
nowhere-zero $3$-flow. Then $G$ is antibalanced and the valency of
every vertex is a multiple of $6$.
\end{lemma}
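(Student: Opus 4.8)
The plan is to analyze the local structure at each vertex $v$ imposed by a stable nowhere-zero $3$-flow $\phi$, where $G$ is positively oriented with respect to $\phi$. Since $\phi$ takes values in $\{\pm 1, \pm 2\}$ and positive orientation makes every edge-value positive, each edge incident with $v$ carries value $1$ or $2$ and is directed either towards $v$ or out of $v$. Stability says that edges with value $1$ directed into $v$ and edges with value $1$ directed out of $v$ cannot coexist — so all value-$1$ edges at $v$ point the same way; likewise all value-$2$ edges at $v$ point the same way. Let $a$ be the number of value-$1$ edges at $v$ and $b$ the number of value-$2$ edges at $v$, so the valency is $a+b$. The Kirchhoff law at $v$ (outflow minus inflow is zero) then forces, up to reversing the roles of "in" and "out", that $a \cdot 1 = b \cdot 2$, i.e. $a = 2b$. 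Hence the valency at $v$ equals $a + b = 3b$, a multiple of $3$; since $G$ is eulerian the valency is even, so $3b$ is even, forcing $b$ even, and thus the valency is a multiple of $6$. This establishes the valency claim.

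For antibalance, the idea is to use the positive orientation together with the in/out pattern just described to produce an antibalanced bipartition via Corollary~\ref{col:antibal-bipart}: I want to partition $V(G)$ so that every positive edge joins the two parts and every negative edge lies inside a part. First I would record the orientation data at each vertex: at $v$, all value-$1$ edges point one way (say the "value-$1$ direction at $v$") and all value-$2$ edges point the other way. Now consider any edge $e = uv$. I claim the behaviour of $e$ at its two ends is constrained. If $e$ is positive, it is an ordinary edge, so exactly one of its half-edges points into its vertex; if $e$ has value $1$ then at one end it is an "incoming value-$1$ edge" and at the other an "outgoing value-$1$ edge", and similarly for value $2$. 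If $e$ is negative, it is broken (extroverted or introverted), so at both ends it points the same way relative to the vertex. The plan is to define a two-colouring of vertices capturing "is the value-$1$ direction here pointing inward or outward" and check that positive edges flip this colour across the edge while negative edges preserve it — which is exactly an antibalanced bipartition. One subtlety: a vertex may a priori have $b = 0$ (no value-$2$ edges) or $a = 0$; but $a = 2b$ rules out $a=0<b$ unless $b=0$ too, i.e. the vertex is isolated, which cannot happen in a connected graph with $\geq 2$ vertices, so every vertex has both types present and the colouring is well-defined. A loop at $v$, being a single edge with both half-edges at $v$, either points both ways out (extroverted, negative, consistent with "inside a part") or is ordinary — but an ordinary loop would have value-$1$-direction-in at one half-edge and value-$1$-direction-out at the other, contradicting stability, so all loops are negative, again consistent.

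The main obstacle will be the antibalance argument — making the colouring rigorous and checking it respects all edge types, including handling the orientation bookkeeping carefully (which half-edge is "value-$1$ direction" and verifying the flip/preserve dichotomy is forced, not just possible). The valency computation is essentially immediate once stability is unpacked, but I would double-check the Kirchhoff bookkeeping: one must note that stability does not by itself say the value-$1$ edges point opposite to the value-$2$ edges at $v$ — a priori both could point in. But if all incident edges pointed into $v$ the outflow would be $0$ while the inflow $a + 2b > 0$, violating Kirchhoff; so the two types must point oppositely, and then $a = 2b$. A clean way to phrase the whole proof is: fix the positive orientation, observe that at each vertex the incident edges split into a set $S_v$ all pointing the same way and its complement all pointing the other way, with Kirchhoff forcing one of these sums to be twice the other; deduce $|S_v|$-type counts and hence divisibility by $6$; then define the bipartition by whether the $1$-valued edges at $v$ point inward, and verify via the ordinary-versus-broken dichotomy that this is an antibalanced bipartition, invoking Corollary~\ref{col:antibal-bipart}.
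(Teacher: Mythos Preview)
Your proposal is correct and follows essentially the same approach as the paper: both analyze the positive orientation at each vertex, use stability plus Kirchhoff to get the $a=2b$ relation and hence valency divisible by $6$, and then bipartition vertices according to whether the value-$1$ edges point inward or outward (the paper's ``Type~1'' and ``Type~2''), invoking Corollary~\ref{col:antibal-bipart}. You are in fact slightly more careful than the paper in explicitly arguing why the value-$1$ and value-$2$ edges must point in opposite directions at each vertex (the paper's ``Thus'' conceals this small Kirchhoff argument), and in treating loops.
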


\begin{proof}
Let $\phi$ be a stable nowhere-zero $3$-flow on $G$ which is
positively oriented. Consider an arbitrary vertex $v$ of $G$.
Clearly, all the edges from $E(v)$ that carry the same value
$a\in\{1,2\}$ under $\phi$ have the same orientation with
respect to $v$ -- either to or from~$v$. Thus the situation at
$v$ is that either all edges from $E(v)$ with value $1$ are
directed to $v$ and those with value $2$ are directed out
of~$v$ (\textit{Type} 1), or vice versa (\textit{Type} 2).

By the Kirchhoff law, there must be an integer $m$ such that
$E(v)$ has $m$ edges with value $2$ and $2m$ edges with value~$1$.
Hence, the valency of $v$ equals $3m$, but since $3m$ is an even
integer, we infer that the valency of $v$ is $6n$ for some $n$.

Finally, observe that an edge joining two vertices of the same
type is negative whereas an edge joining two vertices of
different types is positive. By
Corollary~\ref{col:antibal-bipart}, the partition of the vertex
set of $G$ into the vertices of Type 1 and those of Type 2 is
an antibalanced bipartition. Hence, $G$ is antibalanced, as
claimed.
\end{proof}

We are now in the position to prove the charactarisation of
signed eulerian graphs with flow number three.

\begin{theorem}\label{thm:3-flows}
Let $G$ be a connected signed eulerian graph. Then $\Phi(G)=3$ if
and only if $G$ is triply odd.
\end{theorem}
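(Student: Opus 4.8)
\section*{Proof proposal}

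The implication ``$G$ triply odd $\Rightarrow\Phi(G)=3$'' is exactly Proposition~\ref{prop:3-sufficiency}, so what has to be proved is the converse. I would argue by contradiction: let $G$ be a connected signed eulerian graph with $\Phi(G)=3$ that is \emph{not} triply odd, chosen with $|E(G)|$ as small as possible. Since $\Phi(G)=3\ne 2$, Theorem~\ref{thm:even} shows $G$ is odd; fix a nowhere-zero $3$-flow $\phi$ on $G$ together with its positive orientation. (Loops need only minor separate care: a positive loop is irrelevant to both hypothesis and conclusion, and a negative loop can be simulated by a negative circuit of length two, so one may pretend $G$ is loopless.)

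The first and principal step is to arrange that $\phi$ is \emph{stable}. Suppose it is not, so that some vertex $v$ is incident with edges $e=vx$ and $f=vy$ with $\phi(e)=\phi(f)$, one directed into $v$ and the other out of $v$. I would form $G'$ from $G$ by deleting $e$ and $f$ and inserting a single edge $g=xy$ carrying the same value, with its half-edge at $x$ oriented as in $e$ and its half-edge at $y$ oriented as in $f$. One checks that $\sigma(g)=\sigma(e)\sigma(f)$; that Kirchhoff's law at $v$, $x$, $y$ is undisturbed (at $v$ the contributions of the deleted pair cancel, being a transit), so $\phi$ is still a nowhere-zero $3$-flow on $G'$; and that all valencies stay even, the parity of the number of negative edges is preserved, and $|E(G')|=|E(G)|-1$. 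If $G'$ is connected it is a smaller connected signed eulerian graph, odd, carrying a nowhere-zero $3$-flow, hence of flow number $3$, hence triply odd by minimality; replacing $g$ by the path $x$-$v$-$y$ inside whichever of the three odd eulerian summands contains $g$ then exhibits $G$ as triply odd, a contradiction. If $G'$ is disconnected it has exactly two components (only two edges were removed), an odd one $C$ and an even one $D$; here $C$ is a smaller connected signed eulerian graph of flow number $3$, so it is triply odd by minimality, and a short case analysis --- according as the vertex $v$ ends up in $C$ or in $D$ --- reinserts the path $x$-$v$-$y$ and absorbs $D$ into one of the three odd summands of $C$ along a common vertex, once more producing a triply odd decomposition of $G$. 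Hence $\phi$ is stable.

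By Lemma~\ref{lemma:stableflow}, the existence of a stable nowhere-zero $3$-flow forces $G$ to be antibalanced with every valency divisible by $6$. Switch $G$ to the all-negative signature; then the number of negative edges equals $|E(G)|$, which is odd because $G$ is odd. Next I would pass to a $6$-regular graph: since in any connected eulerian graph each component of $G-v$ meets $v$ in an even, hence positive, number of edges, a vertex of valency $6n$ with $n\ge 2$ can be split into a vertex of valency $6$ and a vertex of valency $6(n-1)$ without destroying connectedness; iterating, and noting that vertex-splitting never alters an edge sign, produces a connected, all-negative, $6$-regular graph $G''$ with $|E(G'')|=|E(G)|$ odd, hence of odd order $|E(G'')|/3$. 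By Theorem~\ref{thm:6regular}, $G''$ decomposes into three eulerian subgraphs sharing a vertex, each with an odd number of edges --- which, $G''$ being all-negative, is a triply odd decomposition. Finally, undoing the vertex-splits one at a time and merging the two fragments back inside each of the three summands preserves even valencies, oddness of the summands, connectedness, and the common vertex, so the triply odd decomposition lifts all the way back to $G$. This contradicts the choice of $G$, and together with Proposition~\ref{prop:3-sufficiency} completes the proof.

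I expect the delicate point to be the first step, the production of a stable flow. The local replacement that removes a stability violation can disconnect the graph, and one must check in each resulting configuration that a triply odd decomposition of the odd component can be completed --- by grafting in the even component along a shared vertex --- to one of the whole graph; tracking how the parity of the number of negative edges behaves under both the replacement and its inverse is the crux of this bookkeeping. The remaining two ingredients, the connectivity-preserving reduction to the $6$-regular case and the lifting of triply odd decompositions through vertex splits, are routine once the structural input Theorem~\ref{thm:6regular} is granted.
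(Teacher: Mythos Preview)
Your proposal is correct and follows essentially the same architecture as the paper's proof: take a minimal counterexample, show its nowhere-zero $3$-flow must be stable via the local edge-replacement at an unstable vertex, invoke Lemma~\ref{lemma:stableflow} to conclude the graph is antibalanced with all valencies divisible by~$6$, reduce to the $6$-regular case, and finish with Theorem~\ref{thm:6regular}. The one noteworthy difference is the induction parameter: the paper uses the cycle rank, which also drops under a vertex split, so minimality is invoked a second time in Claim~2 (and the disconnected case of the split is handled there rather than avoided); you induct on $|E(G)|$, which is unchanged by splitting, and accordingly handle the $6$-regular reduction by a direct connectivity-preserving split followed by lifting the decomposition back through the identifications --- both routes are valid.
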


\begin{proof}
In Proposition~\ref{prop:3-sufficiency} we have proved that the
condition is sufficient. It remains to prove its necessity. Let
$G$ be a signed eulerian graph with $\Phi(G)=3$. By
Theorem~\ref{thm:even}, $G$ is unbalanced and odd. To show that
$G$ is triply odd we proceed by induction on the cycle rank
$\beta(G)$ of~$G$. Recall that the cycle rank of a graph is the
dimension its cycle space, and that a graph $G$ with $k$
components has $\beta(G)=|E(G)|-|V(G)|+k$.

If $G$ contains a $2$-valent vertex incident with edges $e_1$
and $e_2$, we may suppress the vertex and form a new edge $e$
whose sign equals the product of signs of $e_1$ and $e_2$. The
result is a signed eulerian graph $G'$ with same cycle rank and
the same flow number. This allows us to assume, whenever
convenient, that the valency of each vertex of $G$ is at least
$4$. With this additional assumption we obtain that
$\beta(G)\ge |V(G)|+1$, further implying that up to a
homeomorphism the only signed eulerian graph $G$ with
$\Phi(G)=3$ and $\beta(G)\le 3$ is the bouquet of three
unbalanced loops. Its cycle rank is $3$, and for this graph the
result clearly holds. This verifies the basis of induction.

For the induction step let $G$ be a signed eulerian graph with
$\Phi(G)=3$ and cycle rank $\beta(G)>3$, and assume that the
assertion holds for all signed eulerian graphs with cycle rank
smaller than the cycle rank of $G$. Suppose to the contrary
that for $G$ the assertion fails. Then $G$ is a minimum
counterexample, and our aim is to derive a contradiction
for~$G$.

\medskip\noindent
Claim 1. \textit{A nowhere-zero $3$-flow on a minimum
counterexample is stable.}

\medskip\noindent
Proof of Claim 1. Let $\phi$ be a nowhere-zero $3$-flow on $G$.
Suppose that there exists a vertex $v$ at which $\phi$ is not
stable, and let $G$ be positively directed with respect to
$\phi$. Then $E(v)$ contains a pair of edges $e$ and $f$ with
$\phi(e)=\phi(f)$ such that $e$ is directed to $v$ and $f$ is
directed out of~$v$.

If $e$ and $f$ coincide, then $e$ must be a positive loop. It
follows that $G-e$ is an odd eulerian graph carrying a
nowhere-zero $3$-flow, so $\Phi(G-e)=3$,  by
Theorem~\ref{thm:even}. Since $\beta(G-e)<\beta(G)$, the graph
$G-e$ has a triply odd decomposition $\{G_1, G_2, G_3\}$. By
adding $e$ to any $G_i$ we obtain a triply odd decomposition
for $G$, a contradiction.

Now suppose that $e$ and $f$ are distinct edges. In this case
we remove $e$ and $f$ from $G$ and replace the path $ef$ with a
single edge~$g$ whose sign equals the product of signs of $e$
and~$f$. In the resulting graph $G'$, all vertices have an even
valency and the number of negative edges has the same parity as before.
Due to the consistent orientation of the inner half-edges of
the path~$ef$, the new edge $g$ has a natural bidirection
determined by the outer half-edges of $ef$ and this bidirection
is consistent with the sign of $g$. Furthermore, setting
$\phi(g)=\phi(e)=\phi(f)$ turns $\phi$ into a nowhere zero
$3$-flow on $G'$.

If $G'$ is connected, then it is an odd eulerian graph,
implying that $\Phi(G')=3$. As $\beta(G')< \beta(G)$, we can
find a triply odd decomposition $\{G_1', G_2', G_3'\}$ of $G'$
with common vertex $v$. One of the subgraphs contains the
edge~$g$; we may assume that this graph is~$G_1$. Then
$G_1=(G_1'-g)\cup\{e,f\}$ is an odd eulerian subgraph of $G$,
and consequently $\{G_1,G_2',G_3'\}$ is a triply odd
decomposition of $G$ with common vertex $v$, a contradiction.

If $G'$ is disconnected, then it has exactly two components $H$
and $K$, both eulerian, one of which, say $H$, is odd. Since
$H$ carries a nowhere-zero $3$-flow, we have $\Phi(H)=3$.
However, $\beta(H)<\beta(G)$, so $H$ has a triply odd
decomposition $\{H_1,H_2,H_3\}$. Reinserting the vertex $v$
into the edge $g$ restores the edges $e$ and $f$ without
changing the parity of the number negative edges in the
component of $G'$ containing $g$. It is therefore possible
convert the decomposition $\{H_1,H_2,H_3\}$ of $H$ into a
triply odd decomposition of $G$, a contradiction again.

In all possible cases the assumption that the $3$-flow $\phi$
is unstable produces a contradiction. It follows that $\phi$ is
stable, as claimed.

\medskip\noindent
Claim 2. \textit{A minimum counterexample is an antibalanced
$6$-regular graph.}

\medskip\noindent
Proof of Claim 2. Again, let $G$ be a minimum counterexample.
As we have just shown, $G$ carries a stable nowhere-zero
$3$-flow. By Lemma~\ref{lemma:stableflow}, $G$ is antibalanced
and the valency of every vertex is a positive multiple of $6$.
It remains to prove $G$ is $6$-regular.

Suppose that $G$ contains a vertex $v$ of valency $6n$ for some
$n>1$. Replace $v$ with two new vertices $v'$ and $v''$ and
join the edges originally incident with $v$ to the two new
vertices in such a way that $v'$ becomes $6$-valent, $v''$
becomes $6(n-1)$-valent, and the resulting graph $G'$ continues
to carry a stable nowhere-zero $3$-flow. This is clearly
possible. Since $G'$ is odd, we have $\Phi(G')=3$.

If $G'$ is connected, then $\beta(G')<\beta(G)$, and by the
induction hypothesis $G'$ has a triply odd decomposition. This
decomposition readily induces one for $G$, a contradiction. If
$G'$ is disconnected, then it has two components $G_1$ and
$G_2$, only one of which, say~$G_1$, is odd. Since
$\Phi(G_1)=3$ and $\beta(G_1)<\beta(G)$, there exists a triply
odd decomposition in~$G_1$. By extending one of the subgraphs
of the decomposition with $G_2$ we obtain a similar
decomposition for $G$, a contradiction again.

All this shows that $G$ cannot have a vertex of valency $6n$
for $n>1$. Therefore $G$ is $6$-regular, and the proof of Claim
2 is complete.

\medskip

Now we can finish the induction step. As above, let $G$ be a
minimum counterexample with $\Phi(G)=3$. From Claim~2 we know
that $G$ is an antibalanced $6$-regular graph. Since $G$ is
odd, it has an odd number of edges, and hence an odd order. In
this situation we can apply Theorem~\ref{thm:6regular} to infer
that $G$ is triply odd contradicting the assumption that $G$ is
a counterexample. This concludes the induction step as well as
the proof of the theorem.
\end{proof}

\section{Group-valued flows}

We conclude this paper with a brief discussion of nowhere-zero
flows with values in abelian groups other than the group of
integers. Our final result provides a complete characterisation
of signed eulerian graphs that admit a nowhere-zero $A$-flow
for a given abelian group $A\ne 0$.

\begin{theorem}
Let $G$ be a signed eulerian graph and let $A$ be a nontrivial
abelian group. The following statements hold true.
\begin{itemize}
\item[{\rm (a)}]{If $A$ contains an involution, then $G$
    admits a nowhere-zero $A$-flow.}
\item[{\rm (b)}]{If $A\cong \mathbb{Z}_3$, then $G$ admits
    a nowhere-zero $A$-flow if and only if $G$ is triply
    odd.}
\item[{\rm (c)}]{Otherwise, $G$ has a nowhere-zero $A$-flow
    if and only if $G$ is not tightly unbalanced.}
\end{itemize}
\end{theorem}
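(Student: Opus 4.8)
The plan is to prove the three cases separately, reusing the integer-flow results from the earlier sections and the classification of small groups by their torsion.

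For part (a), suppose $A$ has an involution, i.e.\ an element $t$ with $t \ne 0$ and $t + t = 0$. The idea is that such a $t$ behaves exactly like $1$ does in a $\mathbb{Z}_2$-flow: sending the value $t$ around an eulerian trail of $G$ yields an $A$-flow in which every edge carries the value $t \ne 0$, and the Kirchhoff law holds at each vertex because each vertex has even valency and $t = -t$, so the signs coming from bidirection and from the number of times the trail enters/leaves the vertex are immaterial. Concretely, one fixes an arbitrary orientation, picks an eulerian trail, assigns $t$ to every edge, and checks the vertex condition: at a vertex of valency $2k$ the local contribution is a sum of $2k$ terms each equal to $\pm t = t$, which is $2k \cdot t = 0$. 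Hence $G$ always has a nowhere-zero $A$-flow, with no hypothesis on balance at all. (Note this gives a flow even for tightly unbalanced $G$, which is consistent since a single negative edge causes no obstruction once $2$-torsion is available.)

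For part (b), the group $A \cong \mathbb{Z}_3$ has the property that an element is nonzero iff it is a generator, and $\mathbb{Z}_3$-flows are ``linear'' in the same way integer $3$-flows are. The cleanest route is to observe that a signed graph admits a nowhere-zero $\mathbb{Z}_3$-flow if and only if it admits a nowhere-zero integer $3$-flow: one direction is reduction modulo $3$ (a nowhere-zero integer $3$-flow has all values in $\{\pm 1\}$, which stay nonzero mod $3$); the other direction follows because a $\mathbb{Z}_3$-flow lifts — choose integer representatives in $\{-1,1\}$ for each edge value, which gives an integer-valued assignment whose vertex sums are divisible by $3$ and bounded, hence zero after the standard lifting argument (or simply: over $\mathbb{Z}_3$, reversing an edge to make all values equal to $1$ shows the flow is supported on a disjoint union of unbalanced-parity structures, and one reconstructs an integer $3$-flow directly). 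Once this equivalence is in hand, Theorem~\ref{thm:even} and Theorem~\ref{thm:3-flows} finish the job: $G$ has a nowhere-zero integer $3$-flow iff $\Phi(G) \le 3$ iff $G$ is even or triply odd; but an even eulerian graph has $\Phi(G) = 2$, and a short check shows an even eulerian graph is never triply odd while still admitting a nowhere-zero $\mathbb{Z}_3$-flow trivially via the $2$-flow reduced mod $3$ — so we must phrase the statement carefully: an even graph admits a nowhere-zero $\mathbb{Z}_3$-flow too. I expect the subtlety here is that the claimed characterisation ``$G$ admits a nowhere-zero $\mathbb{Z}_3$-flow iff $G$ is triply odd'' implicitly restricts to graphs that are \emph{not} even — otherwise a $2$-flow reduces to a nonzero $\mathbb{Z}_3$-flow. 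Resolving this wording, and making sure the lift/reduction between $\mathbb{Z}_3$-flows and integer $3$-flows is stated correctly, will be the main obstacle in part (b).

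For part (c), the remaining groups are those with no involution and not isomorphic to $\mathbb{Z}_3$; equivalently, the torsion subgroup has no $2$-torsion and $A \not\cong \mathbb{Z}_3$. The necessity direction is immediate from Corollary~\ref{cor:euler-flow-admiss} combined with the observation (from Bouchet, cited in the excerpt) that a tightly unbalanced $2$-edge-connected graph has no nowhere-zero $A$-flow for any $A$ — the extroverted-sum argument of Lemma~\ref{lemma:extro} works verbatim over any abelian group, forcing the value on the unique negative edge to be $0$. For sufficiency, assume $G$ is not tightly unbalanced; by Corollary~\ref{cor:euler-flow-admiss} $G$ is flow-admissible, so by Theorem~\ref{thm:4-flows} it has a nowhere-zero $4$-flow. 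If $A$ contains an element of infinite order or of order $\ge 4$, or contains $\mathbb{Z}_2 \times \mathbb{Z}_2$ — the standard trick is that any abelian group that is not $0$, $\mathbb{Z}_2$, or $\mathbb{Z}_3$ admits a nowhere-zero ``$4$-flow-compatible'' homomorphic image, or one directly uses the fact that a graph with a nowhere-zero $4$-flow has a nowhere-zero $\mathbb{Z}_2 \times \mathbb{Z}_2$-flow and hence a nowhere-zero $A$-flow whenever $A$ maps onto or contains a suitable group. The cleanest approach: a nowhere-zero $4$-flow decomposes (as in Theorem~\ref{thm:4-flows}(d)) into two even eulerian subgraphs carrying $2$-flows; sending a fixed generator $a$ of a chosen cyclic subgroup $\langle a\rangle \le A$ of order $\ge 3$ along the eulerian trail of $H_1$ and a second independent value along $H_2$ produces a nowhere-zero $A$-flow, provided we can pick the two values so that no edge in $H_1 \cap H_2$ gets $0$; since $H_1$ and $H_2$ overlap in a set where the combined value is $a + b$, we need $a, b, a+b$ all nonzero, which is possible in any abelian group that is not $0$, $\mathbb{Z}_2$, or $\mathbb{Z}_3$ (if there is an element of order $\ge 4$ take $a=b$ with $2a \ne 0$; if the group is an elementary abelian $3$-group of rank $\ge 2$ take $a, b$ independent; and $2$-torsion is excluded by hypothesis, $\mathbb{Z}_3$ by hypothesis). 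The main obstacle in part (c) is this finite case analysis — verifying that ``not $0$, not $\mathbb{Z}_2$, not $\mathbb{Z}_3$'' is exactly the condition under which three pairwise-admissible nonzero values $a, b, a+b$ exist — but it is entirely elementary once the group is split into ``has an element of order $\ge 4$'' versus ``is an elementary abelian $3$-group'' (the no-$2$-torsion hypothesis forcing this dichotomy).
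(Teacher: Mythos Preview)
Your overall architecture matches the paper's proof: constant involution for (a); the $\mathbb{Z}_3$-flow $\leftrightarrow$ integer $3$-flow equivalence plus Theorem~\ref{thm:3-flows} for (b); and the two-even-subgraph cover from Theorem~\ref{thm:4-flows}(d) with a choice of two group elements for (c), together with the extroverted-sum obstruction for tightly unbalanced graphs. So the strategy is right; the problems are in the execution of (c) and the vagueness in (b).

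In part (c) your overlap analysis is wrong. When you send $a$ along an eulerian trail of $H_1$ and $b$ along one of $H_2$ and then add the two flows with respect to a common orientation, an edge in $H_1\cap H_2$ receives $\pm a\pm b$, not just $a+b$: the two trails may traverse that edge in either agreeing or opposing directions. Hence you need $a+b\ne 0$ \emph{and} $a-b\ne 0$. Your choice $a=b$ therefore fails, since $a-b=0$ kills every edge where the trails disagree. The paper avoids this by taking $a$ and $2a$ when $A$ has an element of order at least $4$ (so the overlap values lie in $\{\pm a,\pm 3a\}$, all nonzero), and independent generators $b_1,b_2$ when $A$ contains $\mathbb{Z}_3\times\mathbb{Z}_3$. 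Your dichotomy ``element of order $\ge 4$'' versus ``elementary abelian $3$-group of rank $\ge 2$'' is exactly the paper's, and is correct; only the chosen pair needs fixing. Two smaller points in (c): Lemma~\ref{lemma:extro} does \emph{not} go through verbatim over every abelian group --- the proof concludes from $2s=0$ that $s=0$, which requires no $2$-torsion --- though under the hypothesis of (c) this is automatic (the paper states it as ``extends to any group with no involution''); and you silently skip the even case, which the paper handles separately by sending a single nonzero element along one eulerian trail.

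In part (b) your reduction/lifting sketch is too loose to stand as a proof (a nowhere-zero integer $3$-flow takes values in $\{\pm1,\pm2\}$, not $\{\pm1\}$; and your lifting direction is just a gesture). The paper does not attempt this argument at all: it simply invokes \cite[Theorem~1.5]{Xu}, which gives the $\mathbb{Z}_3\leftrightarrow$ integer $3$-flow equivalence for $2$-edge-connected signed graphs. You are right, however, to flag the even case: an even signed eulerian graph has a nowhere-zero $2$-flow, hence a nowhere-zero $\mathbb{Z}_3$-flow, yet is never triply odd. The paper's proof passes over this point.
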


\begin{proof}
(a) If $A$ contains an involution, then a nowhere-zero $A$-flow
can be produced by simply valuating each edge of $G$ by the
same involution.

(b) Assume that $A\cong \mathbb{Z}_3$. By \cite[Theorem
1.5]{Xu}, a 2-edge-connected signed graph admits a nowhere-zero
$\mathbb{Z}_3$-flow if and only if it admits a nowhere-zero
3-flow. Our Theorem~\ref{thm:3-flows} now implies that $G$ has
a nowhere-zero $A$-flow if and only if it is triply odd.

(c) Assume that $A$ contains no involution and is not
isomorphic to $\mathbb{Z}_3$. Then either $A$ contains a
subgroup $B$ isomorphic to $\mathbb{Z}_3\times\mathbb{Z}_3$ or
has an element of order at least $4$. If $G$ is an even
eulerian graph, then sending any nontrivial element of $A$
along an eulerian trail of $G$ will produce a nowhere-zero
$A$-flow on $G$. Let $G$ be odd but not tightly unbalanced.
By Theorem~\ref{thm:4-flows}~(d), we can cover $G$ with two
even eulerian subgraphs, say $T_1$ and $T_2$. If $A$ contains
$B\cong \mathbb{Z}_3\times\mathbb{Z}_3$, we take any generating
set $\{b_1,b_2\}$ for $B$ and send the value $b_1$ along an
eulerian trail in $T_1$ and the value $b_2$ along an eulerian
trail in $T_2$. If $A$ has an element of order at least 4, say
$a$, we proceed similarly with $a$ in $T_1$ and $2a$ in $T_2$.
In both cases we get a nowhere-zero $A$-flow on $G$. Finally,
let $G$ be tightly unbalanced. It is easy to see that the
argument from the proof of Lemma~\ref{lemma:extro} extends to
any group with no involution, and hence applies to $A$. It
follows that $G$ has no nowhere-zero $A$-flow, and the proof is
complete.
\end{proof}

\vskip0.5cm

\textbf{Acknowledgements} 

We are grateful to Andr\'e Raspaud and Xuding Zhu for a fruitful 
discussion on the topic of our present paper. 

Research reported in this paper was supported by the following
grants. The first author was partially supported by VEGA 1/0876/16 
and the second author was partially supported by VEGA
1/0474/15. Both authors were also supported from APVV-0223-10
and APVV-15-0220.

\end{document}